\newcommand*{\wh}{\widehat}
\newcommand*{\ol}{\overline}
\newcommand*{\bbN}{\mathbb N}
\newcommand*{\bbR}{\mathbb R}
\newcommand*{\cB}{\mathcal{B}}
\newcommand*{\cF}{\mathcal{F}}
\newcommand*{\N}{\mathbb{N}}
\newcommand*{\IN}{\mathbb{N}}
\newcommand*{\IZ}{\mathbb{Z}}
\newcommand*{\IR}{\mathbb{R}}
\newcommand*{\R}{\mathbb{R}}
\newcommand*{\eps}{\varepsilon}
\newcommand*{\loc}{\mathrm{loc}}
\newcommand*{\supp}{\operatorname{supp}}
\newcommand*{\Law}{\operatorname{Law}}
\newcommand*{\Var}{\operatorname{Var}}
\newcommand*{\Cov}{\operatorname{Cov}}
\newcommand{\be}{\begin{eqnarray*}}
\newcommand{\ee}{\end{eqnarray*}}
\newcommand{\ben}{\begin{eqnarray}}
\newcommand{\een}{\end{eqnarray}}
\newcommand{\bi}{\begin{itemize}}
\newcommand{\ei}{\end{itemize}}
\newtheorem{theo}{Theorem}[section]
\newtheorem{lemma}[theo]{Lemma}
\newtheorem{propo}[theo]{Proposition}
\newtheorem{corollary}[theo]{Corollary}
\theoremstyle{definition}
\newtheorem{ex}[theo]{Example}
\newtheorem{defi}[theo]{Definition}
\newtheorem{remark}[theo]{Remark}
\title{A functional limit theorem for irregular SDEs}
\author{Stefan Ankirchner \and Thomas Kruse \and Mikhail Urusov \thanks{Stefan Ankirchner, Institute for Mathematics, University of Jena, Ernst-Abbe-Platz 2, 07745 Jena, Germany. \emph{Email:} s.ankirchner@uni-jena.de, \emph{Phone:} +49 (0)3641 946275;
Thomas Kruse, Faculty of Mathematics, University of Duisburg-Essen, Thea-Leymann-Str.~9, 45127 Essen, Germany.
\emph{Email:} thomas.kruse@uni-due.de, \emph{Phone:} +49 (0)201 183 3911;
Mikhail Urusov, Faculty of Mathematics, University of Duisburg-Essen, Thea-Leymann-Str.~9, 45127 Essen, Germany; and
Steklov Mathematical Institute,
Russian Academy of Sciences,
ul.~Gubkina~8,
119991 Moscow, Russia.
\emph{Email:} mikhail.urusov@uni-due.de, \emph{Phone:} +49 (0)201 183 7428.
Stefan Ankirchner and Thomas Kruse acknowledge the financial support from the French Banking Federation through the \emph{Chaire Markets in Transition}.
The work of Mikhail Urusov is supported by the Russian Science Foundation under grant 14-21-00162 in Steklov Mathematical Institute of Russian Academy of Sciences.
We are grateful to
Denis Belomestny,
Hans-J\"urgen Engelbert,
Martin Hutzenthaler,
Arturo Kohatsu-Higa,
Nikolaus Schweizer,
Pavel Yaskov
and seminar participants
in Le Mans, Evry, Paris,
UCL, ISFA Lyon,
Duisburg-Essen, Milan
for helpful comments.}}
\begin{document}

\maketitle

\begin{abstract}
Let $X_1, X_2, \ldots$ be a sequence of i.i.d.\ real-valued random variables with mean zero, and consider the scaled random walk of the form $Y^N_{k+1} = Y^N_{k} +  a_N(Y^N_k) X_{k+1}$, where $a_N: \R \to \R_+$. We show, under mild assumptions on the law of $X_i$, that one can choose the scale factor $a_N$ in such a way that the process $(Y^N_{\lfloor N t \rfloor})_{t \in \R_+}$ converges in distribution to a given diffusion $(M_t)_{t \in \R_+}$ solving a stochastic differential equation with possibly irregular coefficients, as $N \to \infty$. To this end we embed the scaled random walks into the diffusion $M$ with a sequence of stopping times with expected time step~$1/N$. 
\end{abstract}

\begin{center}\footnotesize
  \begin{tabular}{r@{ : }p{10cm}}
      {\it 2010 MSC} & 60F17, 60J60, 65C30.\\
      {\it Keywords} &  stochastic differential equations,
	irregular diffusion coefficient,
	weak law of large numbers for u.i.\ arrays,
	weak convergence of processes,
	Skorokhod embedding problem.
    \end{tabular}
  \end{center}

\section*{Introduction}
Let $X_1, X_2, \ldots$ be a sequence of i.i.d.\ integrable random variables with $E(X_i) = 0$.
Let $a_N:\R \to \R_+$ be a function depending on $N \in \N$, and let $(Y^N_k)_{k \in \IZ_+}$ be the process satisfying $Y^N_0 = m \in \R$ and
\ben\label{classic scaling}
Y^N_{k+1} = Y^N_{k} +  a_N(Y^N_k) X_{k+1}, \quad k \in \IZ_+.
\een
We extend $Y^N$ to a continuous time processes by defining $Y^N_t = Y^N_{\lfloor t \rfloor} + (t-\lfloor t \rfloor) (Y^N_{\lfloor t \rfloor + 1} - Y^N_{\lfloor t \rfloor})$.

Consider the particular case where $E(X_i^2) = 1$ and $a_N$ is constant equal to $\frac{1}{\sqrt{N}}$. Then  $(Y^N_k)$ is the random walk generated by $(X_i)$, scaled by the constant $\frac{1}{\sqrt{N}}$, and Donsker's theorem implies that the continuous-time process $(Y^N_{Nt})_{t \in \R_+}$ converges in distribution to a Brownian motion as $N \to \infty$ (see e.g.\ \cite{Donsker51}, \cite{Prokhorov56} or Section~8.6 in~\cite{durrett}).

In this paper we address the question of whether we can choose the scale factor $a_N$ in such a way that the scaled random walk $(Y^N_{Nt})_{t \in \R_+}$ converges in distribution to a time homogeneous diffusion $M$ satisfying the stochastic differential equation (SDE)
\ben\label{introSDE}
dM_t = \eta(M_t) dW_t, \quad M_0 = m,
\een
where $W$ is a Brownian motion, and
$\eta\colon \R \to \R$ is a Borel-measurable function
that satisfies
the Engelbert-Schmidt conditions
(see~\cite{ES1985})
in some interval $(l,r)$, $-\infty\leq l<r\leq\infty$,
and vanishes outside $(l,r)$.

If convergence takes place, then one can use the limiting process $M$ as a proxy
for the scaled random walk $Y^N$ for large $N$; or vice versa, $Y^N$ can be used for approximating the SDE $M$. One can thus profit from tools for continuous-time {\it and} discrete-time processes for analyzing both processes $M$ and $Y^N$.

If $\eta$ is Lipschitz continuous, then a natural choice for the scale factor is $a_N(y) = \frac{1}{\sqrt{N}} \eta(y)$. Then $(Y^N_k)$ can be interpreted as the Euler approximation of $M$, and it is known that it converges in distribution to $M$ (see e.g.\ \cite{kloeden1992numerical}). For {\it arbitrary} diffusion coefficients $\eta$ satisfying the Engelbert-Schmidt conditions the question of whether there exist scale factors such $Y^N$ converges to $M$ has not been solved. If the diffusion coefficient is very irregular, then the diffusion intensity $\eta(x_0)$ at a fixed state point $x_0$ can not be used as an approximation of the diffusion coefficient in the neighborhood of $x_0$. Therefore, in order to have convergence, the scaling factors $a_N$ need to take into account the global structure of $\eta$.

Recall that Skorokhod proves Donsker's theorem by embedding in law the random walk scaled by the constant $\frac{1}{\sqrt{N}}$ into the Brownian motion with a sequence of stopping times (see~\cite{skorokhod65}).
We take on Skorokhod's idea and show, under some nice conditions on the distribution of $X_i$,
that there exists a scale factor $a_N\colon (l,r) \to (0,\infty)$ such that $(Y^N_k)_{k \ge 0}$ can be embedded into the diffusion $M$ with a sequence of stopping times with expected time step $1/N$.

Loosely speaking, the embedding works as follows.
We first choose $a_N(m)$
(recall that $m$ is the starting point
in~\eqref{introSDE})
and a stopping time $\rho_1$ such that  $E(\rho_1) = 1/N$ and $M_{\rho_1} \stackrel{d}{=} Y^N_1$.
Conditionally on $\{M_{\rho_1} = y\}$ we choose $a_N(y)$ and a stopping time $\rho_2$ such that $E(\rho_2) = 1/N$ and $M_{\rho_1 + \rho_2} \stackrel{d}{=} y + a_N(y) X_2$. By proceeding like this we obtain a sequence of stopping times $\tau_k =\rho_1 + \ldots + \rho_k$ such that $(M_{\tau_k})_{k \ge 0}$ has the same distribution as the scaled random walk $(Y^N_k)_{k \ge 0}$.

The times $\rho_k$ turn out to be pairwise
uncorrelated and we can check that
they satisfy a certain uniform integrability
property (see Lemma~\ref{lemma ui}).
Under such a uniform integrability property
we prove a version of the weak law of large
numbers for uncorrelated arrays,
which is also interesting in itself
because we do not require finiteness
of the second moments
(see Theorem~\ref{lem:LLN}).
This weak law of large numbers entails that for all $t \in \R_+$ we have $\tau_{\lfloor Nt\rfloor} \to t$ in probability, as $N \to \infty$. From this, one can deduce that $(M_{\tau_{\lfloor Nt\rfloor}})_{t \in \R_+}$ converges in probability to $M$
uniformly on compact time intervals.
Therefore, $(Y^N_{Nt})_{t\in\R_+}$
converges in distribution to~$M$.

For our approach to work one needs to make sure that for every $N \in \N$ and $y \in (l,r)$ there exists a scale factor $a_N(y)$ such that the distribution of $y + a_N(y) X_i$ can be embedded into the diffusion $M$, conditioned to $M_0 = y$, with a stopping time with expectation $1/N$. The collection of distributions that can be embedded into $M$ with integrable stopping times is fully described in~\cite{AHS}. Moreover, there is a closed form integral expression for the minimal expectation of an embedding stopping time (see Theorem~3
in~\cite{AHS}).
This allows us to derive
weak sufficient conditions (see Section \ref{sec3}) for the existence of a scale factor
$a_N\colon(l,r) \to (0,\infty)$
such that $(Y^N_k)$ can be embedded into $M$ with stopping times having expectation~$1/N$.

Our approach to generalize Donsker’s theorem is essentially different from the one pioneered by Stone in~\cite{Stone63}
(also see~\cite{ALW} for a recent
generalization to tree-valued processes).
In that approach the approximating processes
are \emph{continuous-time} Markov processes that
\emph{do not jump over points in their state spaces}
(that is, they can be e.g.\ diffusions
or birth and death processes).
On the contrary, in this paper
we approximate $M$ via \emph{discrete-time}
Markov chains.
Another conceptual difference
is that we develop our theory without
requiring that the approximating Markov chains
do not jump over points.
On an informal level, one might view conditions
\eqref{cond a2l}--\eqref{cond a3lrinfty}
and \eqref{cond:emc1}--\eqref{cond:emc4}
at which we arrive in Section~\ref{sec3}
as an indication of what
comes out when we want
to allow overjumping.


The paper is organized in the following way. In Section~\ref{sec1}, we recall a necessary and sufficient condition, derived in~\cite{AHS}, for a distribution to be embeddable into the diffusion $M$ with an {\it integrable} stopping time. Moreover, we slightly generalize an integral formula for the minimal expectation of an embedding stopping time. In Section~\ref{sec3}, we characterize families of scaled random walks whose laws can be embedded into $M$ via a sequence of increasing stopping times such that the expected distance between two consecutive stopping times is equal to $1/N$, for $N \in \N$. In Section~\ref{sec weak conv}, we provide sufficient conditions for a sequence of scaled random walks, embeddable into $M$, to converge in distribution to~$M$.

\section{Embedding distributions in integrable time}\label{sec1}
In this section we recall a necessary and sufficient condition from \cite{AHS} for a centered distribution to be embeddable in a diffusion via an integrable stopping time.

Let $I = (l,r)$ with $l \in [-\infty,\infty)$ and $r  \in (-\infty, \infty]$.
As usual we denote by $\bar I$ the closure of $I$ in~$\R$.
Let $\eta: \R \to \R$ be a Borel-measurable function satisfying
\begin{align}\label{eq:eiit1}
\eta(x) & \ne 0 \text{ for all } x \in I, \\
\label{eq:eiit2}
\frac{1}{\eta^2} & \in L^1_\loc(I), \\
\label{eq:eiit3}
\eta(x) & = 0 \text{ for all } x \in \R\setminus I,
\end{align}
where $L^1_\loc(I)$ denotes the set of functions that are locally integrable on $I$.

Consider the SDE
\ben\label{sde}
dM_t = \eta(M_t) dW_t, \quad M_0 \sim \gamma,
\een
where $\gamma$ is a probability measure on $I$. The assumptions \eqref{eq:eiit1}--\eqref{eq:eiit3} imply that \eqref{sde} possesses a weak solution that is unique in law (see e.g.\ \cite{ES1985} or Theorem 5.5.7 in \cite{KS}). This means that there exists a pair of processes $(M,W)$ on a filtered probability space $(\Omega, \cF, (\cF_t), P)$, with $(\cF_t)$ satisfying the usual conditions, such that $W$ is an $(\cF_t)$-Brownian motion, $M_0$ is an $\cF_0$-measurable random variable with distribution $\gamma$ and $(M,W)$ satisfies the SDE~\eqref{sde}.
Let us note that $M$ stays in~$l$ (resp.~$r$)
once it hits~$l$ (resp.~$r$).

For all $y \in I$ and $x \in \R$ we define
\be
q(y,x) = \int_y^x \int_y^u \frac{2}{\eta^2(z)}\,dz\,du.
\ee
Notice that It\^o's formula implies that the process $(q(M_0,M_t)-t)$
is a local martingale starting in~$0$.
The assumptions \eqref{eq:eiit1}--\eqref{eq:eiit3} imply that for all $y \in I$ the nonnegative function $q(y, \cdot)$ is finite on $I$ and equal to $\infty$ on $\R \setminus [l,r]$. Besides, $q(y,\cdot)$ is strictly convex on $I$, strictly decreasing to zero on $(l,y)$ and strictly increasing from zero on $(y,r)$. Moreover, for all $y, \bar y \in I$ and $x \in \R$ we have
\ben\label{eq:eiit10}
q(y,x) = q(\bar y,x) - q(\bar y,y) - q_x(\bar y, y) (x-y),
\een
where $q_x$ denotes the partial derivative of $q$ with respect to the second argument.

Recall that by Feller's test for explosions we have $q(y,l+) = \infty$ if and only if the probability for the process $M$ to attain $l$ in finite time is equal to zero. Notice that the non-explosion condition  $q(y,l+) = \infty$ does {\it not} depend on $y$. Moreover, if $l=-\infty$, then $q(y, l+) = \infty$, and hence any solution of \eqref{sde} does not attain $-\infty$ in finite time. Similar statements hold true for the right-hand side boundary $r$.

We next recall a result from \cite{AHS} providing a necessary and sufficient condition for a distribution to be embeddable in $M$ with an integrable stopping time. Let $\mu$ be a centered probability measure on $\R$,
i.e.\ $\int |x|\,\mu(dx) < \infty$ and $\int x\,\mu(dx) = 0$.
Moreover, we assume that $\mu \ne \delta_0$.
Let $K(y,a,\cdot)$, $y\in\bar I$, $a\in\R_+$, be the location-scale family of probability distributions defined by
\ben\label{eq:eiit6}
K(y,a,B) = \mu\left( \frac{B-y}{a}\right), \quad B \in \cB(\R),
\een
whenever $a>0$; and $K(y,0,\cdot)=\delta_y$.
Consider the problem of finding a stopping time $\tau$
such that
\ben\label{eq:eiit7}
\Law(M_\tau |\cF_0) = K(M_0, a(M_0), \cdot),
\een
where $\Law(M_\tau |\cF_0)$ denotes the regular conditional distribution of $M_\tau$ with respect
to~$\cF_0$ and $a\colon \bar I \to \bbR_+$
is a given Borel function.
The unconditional version of this problem is usually referred to as the \emph{Skorokhod embedding problem}
or the \emph{SEP},
see~\cite{hobson2011} or~\cite{Obloj2004} for an overview.
In the subsequent sections we need embedding stopping times that are integrable conditionally on~$\cF_0$,
i.e.\ that satisfy $E[\tau | \cF_0] < \infty$~a.s.
For all $y \in I$ we define
\ben\label{eq:eiit8}
Q(y) = \int_\R q(y,x) K(y, a(y), dx).
\een
One can show that $Q(y)$ is the minimal expected time required for embedding $K(y, a(y), \cdot)$ into $M$, conditional to $M_0 = y$. To provide an intuition, suppose that
the starting in~$0$ local martingale
$(q(M_0,M_t)-t)$ is a true martingale and $\tau$ is a solution of the embedding problem~\eqref{eq:eiit7}. If the optional sampling theorem applies, then $E[\tau|\cF_0]=E[q(M_0,M_\tau)|\cF_0]=Q(M_0)$.
More formally, we have the following result, which is a straightforward
generalization of Theorem~3 and Proposition~4 in~\cite{AHS}:
\begin{theo}\label{th:eiit}
(i) Any $(\cF_t)$-stopping time $\tau$
solving~\eqref{eq:eiit7} satisfies
$E[\tau|\cF_0] \ge Q(M_0)$~a.s.

\smallskip
(ii) There exists a solution $\tau$ of the embedding problem~\eqref{eq:eiit7}
satisfying the property $E[\tau|\cF_0]<\infty$~a.s.
if and only if
\ben\label{eq:100914a2}
Q(M_0) < \infty
\quad\text{a.s.}
\een
In this case, there exists an embedding stopping time $\tau$ with
\ben\label{eq:100914a3}
E[\tau|\cF_0] = Q(M_0)
\quad\text{a.s.}
\een
\end{theo}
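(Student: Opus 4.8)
The plan is to use throughout that $q(M_0,\cdot)\ge 0$ and that, as observed just before the statement, the process $\big(q(M_0,M_t)-t\big)_{t\ge 0}$ is a local martingale starting at~$0$. For part~(i), fix any solution $\tau$ of~\eqref{eq:eiit7} together with a localizing sequence $(\sigma_n)$, so that $\big(q(M_0,M_{t\wedge\sigma_n})-t\wedge\sigma_n\big)_{t\ge 0}$ is a genuine martingale for every~$n$. Optional sampling at the bounded time $\tau\wedge t$, followed by taking conditional expectations given~$\cF_0$, yields $E[q(M_0,M_{\tau\wedge t\wedge\sigma_n})|\cF_0]=E[\tau\wedge t\wedge\sigma_n|\cF_0]$. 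Letting $n\to\infty$, the right-hand side increases to $E[\tau\wedge t|\cF_0]$ by conditional monotone convergence, while on the left-hand side $M_{\tau\wedge t\wedge\sigma_n}\to M_{\tau\wedge t}$ by path continuity, so conditional Fatou (applicable because $q\ge 0$) gives $E[\tau\wedge t|\cF_0]\ge E[q(M_0,M_{\tau\wedge t})|\cF_0]$. A further passage $t\to\infty$, using monotone convergence on the left and Fatou once more on the right, produces $E[\tau|\cF_0]\ge E[q(M_0,M_\tau)|\cF_0]$, and the last conditional expectation equals $Q(M_0)$ by~\eqref{eq:eiit8} and the embedding property~\eqref{eq:eiit7}. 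The only delicate points are the behaviour of $q(M_0,\cdot)$ at the endpoints $l,r$, where the relevant limit may be $+\infty$ but this is harmless for Fatou, and the interpretation of $M_\tau$ on $\{\tau=\infty\}$, on which $E[\tau|\cF_0]=\infty$ and the asserted inequality is trivial.

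Part~(ii) then divides into two implications. The ``only if'' direction follows at once from part~(i): if an embedding with $E[\tau|\cF_0]<\infty$ exists, then $Q(M_0)\le E[\tau|\cF_0]<\infty$~a.s. For the ``if'' direction I would reduce the conditional claim to the unconditional Theorem~3 and Proposition~4 of~\cite{AHS} by conditioning on~$\cF_0$. Passing to a regular conditional distribution, on $\{M_0=y\}$ the process $(M_t)_{t\ge 0}$ is a solution of~\eqref{sde} started deterministically at~$y$ and the target $K(y,a(y),\cdot)$ is a probability measure centred about~$y$; the cited results then provide, whenever $Q(y)<\infty$, an embedding stopping time $\tau^{y}$ with $E[\tau^{y}]=Q(y)$.

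It remains to splice these fibrewise embeddings into a single $(\cF_t)$-stopping time $\tau$ obeying $\tau=\tau^{M_0}$ and to verify that $E[\tau|\cF_0]=Q(M_0)$ holds a.s.\ under the standing assumption~\eqref{eq:100914a2}; this gluing is the step I expect to be the main obstacle, since it concerns the measurable dependence of the embedding on the starting point rather than any new probabilistic content. I would resolve it either by selecting the embedding of~\cite{AHS} in an explicit form whose dependence on the pair $(y,a(y))$ is jointly measurable---for example a Chacon--Walsh type construction assembled from hitting times of lines in the potential coordinate, which is visibly measurable in the target law---or, lacking a closed formula, by applying a measurable selection theorem to the set of minimal embeddings. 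Once $\tau=\tau^{M_0}$ is known to be a bona fide stopping time, the Markov property at time~$0$ transports the fibrewise identities $\Law(M_{\tau^{y}})=K(y,a(y),\cdot)$ and $E[\tau^{y}]=Q(y)$ into the conditional statements~\eqref{eq:eiit7} and~\eqref{eq:100914a3}, finishing the argument.
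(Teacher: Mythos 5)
Your part~(i) is correct and is essentially the intended argument: localizing the nonnegative local martingale $\big(q(M_0,M_t)-t\big)$, applying conditional optional sampling at the bounded time $\tau\wedge t$, and passing to the limit with conditional monotone convergence and conditional Fatou (using that $q(M_0,\cdot)$ is continuous in the extended sense on $[l,r]$, where $M$ lives) yields $E[\tau|\cF_0]\ge E[q(M_0,M_\tau)|\cF_0]=Q(M_0)$; this is exactly the ``straightforward generalization'' of Theorem~3 of~\cite{AHS} that the paper invokes, and the identification $E[q(M_0,M_\tau)|\cF_0]=Q(M_0)$ via the regular conditional law and \eqref{eq:eiit8} is legitimate since $M_0$ is $\cF_0$-measurable. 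The ``only if'' half of~(ii) then follows trivially from~(i), as you say.

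The genuine gap is in the existence half of~(ii): you never produce an $(\cF_t)$-stopping time achieving \eqref{eq:100914a3}, and the step you defer --- splicing the fibrewise embeddings $\tau^y$ into a single stopping time $\tau=\tau^{M_0}$ --- is precisely the entire content of the passage from the unconditional result of~\cite{AHS} to the conditional statement \eqref{eq:eiit7}--\eqref{eq:100914a3}. The paper does not glue fibrewise solutions at all and needs no selection theorem: it works with the \emph{specific} construction of~\cite{AHS}, reproduced in the Appendix, in which the embedding time $\delta(1)$ of \eqref{odea} is defined pathwise as the limit of the solution of the ODE \eqref{ode}, whose right-hand side depends on $(M_0,\,M)$ explicitly and jointly measurably; consequently $\delta(1)$ is by construction an $(\cF_t)$-stopping time, and Proposition~\ref{prop:100914a1} and Corollary~\ref{cor:100914a1} deliver the conditional embedding property \eqref{eq:100914a12} together with $E[\delta(1)|\cF_0]=Q(M_0)$, i.e.\ exactly \eqref{eq:100914a3}. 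Your ``option (a)'' is the right idea in spirit (an explicit construction measurable in the starting point), but it is left unexecuted, and the Chacon--Walsh candidate would carry the extra burden of proving that it attains the \emph{minimal} conditional expectation $Q(M_0)$ rather than merely a finite one. Your ``option (b)'' is worse than a technicality: even granting a measurable selection $y\mapsto\tau^y$ of embedding rules on path space, the map $\omega\mapsto\tau^{M_0(\omega)}(\omega)$ need not be a stopping time of $(\cF_t)$ on the original space, and if the selected embeddings require external randomization one must enlarge the filtration and then re-verify both \eqref{eq:eiit7} and \eqref{eq:100914a3} with respect to the original $\cF_0$. So the proposal proves~(i) and half of~(ii), but the construction that the theorem actually asserts is missing.
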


For the proof of the main results of Section \ref{sec weak conv} it turns out to be helpful to work with the specific solution of the embedding problem~\eqref{eq:eiit7} provided in~\cite{AHS}.
For the reader's convenience we briefly explain the solution method in the Appendix.

\section{Embedding scaled random walks}\label{sec3}
Let $(M,W)$ be a weak solution of
\ben\label{sdeM}
dM_t = \eta(M_t) dW_t, \quad M_0 =m,
\een
with $m \in I$.
Moreover, let $X_1, X_2, \ldots$ be a sequence of i.i.d.\ real-valued integrable random variables with $E(X_i) = 0$. We denote the distribution of $X_i$ by $\mu$.
Throughout we assume that $\mu\ne\delta_0$.

\begin{defi}\label{def:celsf1}
Let $a:\R \to \R_+$ be a Borel function. The process $Y = (Y_k)_{k \in \IZ_+}$, defined by $Y_0 = m$ and
\ben\label{eq:celsf1}
Y_{k+1} = Y_k + a(Y_k) X_{k+1}, \quad k \ge 0,
\een
is called {\it random walk generated by $(X_k)$ with scale factor $a$ and starting point $m$.}

We say that $Y = (Y_k)_{k \in \IZ_+}$ is a {\it scaled random walk} if there exists a scale factor $a$ such that $Y$ is the random walk generated by $(X_k)$ with scale factor $a$.
\end{defi}

In this section we aim at constructing scale factors such that $(Y_k)$ can be embedded in distribution into $M$ with a sequence of stopping times $(\tau_k)$ such that $(M_{\tau_k}) \stackrel{d}{=}(Y_k)$,
that is, both discrete time processes have the same law.
More precisely, we solve the following problem.

\medskip\noindent
\textbf{Problem~(P).}
Let $N \in \IN$. Does there exist a scale factor $a_N$ such that the associated scaled random walk $(Y^N_k)_{k \in \IZ_+}$ with $Y^N_0=m$ can be embedded in distribution into $M$ with a sequence of $(\cF_t)$-stopping times $(\tau^N_k)_{k \in \IZ_+}$ with
\ben\label{mean time step}
\tau^N_0=0
\quad\text{and}\quad
E[\tau^N_{k+1}| \cF_{\tau^N_k}] = \tau^N_k + \frac{1}{N},
\een
for all $k\geq0$?

\medskip
In order to determine the scale factor solving
Problem~(P), we introduce, for all $y \in I$, the mapping $G_y:[0,\infty) \to [0,\infty]$ defined via
\ben\label{defi Gy}
G_y(a) = \int_\R q(y,x)\,K(y,a,dx)
= \int_\R q(y,y+ax)\,\mu(dx).
\een
Recall that $G_y(a)$ is the minimal expected time needed for embedding $K(y,a,dx)$ into $M$ (cf.\ Theorem \ref{th:eiit} and the discussion following~\eqref{eq:eiit8}).
Notice that, for all $y\in I$, the map $G_y(\cdot)$ is strictly increasing with $G_y(0)=0$,
left-continuous by the monotone convergence theorem,
and continuous on
$[0,\infty)\setminus\{a_{\textrm{inf}}\}$ with
$$
a_{\textrm{inf}}:=\inf\{a\in[0,\infty):G_y(a)=\infty\},
\quad\inf\emptyset:=\infty,
$$
by the dominated convergence theorem.

We now provide sufficient conditions guaranteeing that a solution of Problem~(P) exists. We need to distinguish four cases.

\subsection{Case 1: $l = -\infty$ and $r = \infty$}
\label{subsec:2306a1}
In this subsection we make the following assumption.
\bi
\item[(A1)] There exists $y\in I$ such that $G_y(a)<\infty$ for all $a>0$.
\ei

\begin{lemma}\label{Eq condi A1}
(A1) is equivalent to the condition that for all $y \in I$ and $a >0$ we have $G_y(a)<\infty$.
\end{lemma}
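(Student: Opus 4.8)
One direction is immediate: if $G_y(a)<\infty$ for every $y\in I$ and every $a>0$, then in particular the condition (A1) holds. The substance of the lemma is the converse, so the plan is to fix a point $y_0\in I$ with $G_{y_0}(a)<\infty$ for all $a>0$ (which exists by (A1)), fix an arbitrary $y\in I$ and $a>0$, and deduce $G_y(a)<\infty$.

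The starting point is the decomposition identity \eqref{eq:eiit10}. Applied with $\bar y=y_0$ and evaluated at the free point $y+ax$, it gives, for every $x\in\R$,
\[
q(y,y+ax)=q(y_0,y+ax)-q(y_0,y)-a\,q_x(y_0,y)\,x.
\]
I would then integrate this identity against $\mu$. The constant term contributes $-q(y_0,y)$, which is finite since $y,y_0\in I$; the linear correction $q_x(y_0,y)=\int_{y_0}^{y}\tfrac{2}{\eta^2}\,dv$ is finite by \eqref{eq:eiit2}, and $\int_\R x\,\mu(dx)=0$ because $\mu$ is centered and integrable, so that term drops out. Thus the whole question reduces to showing that $\int_\R q(y_0,y+ax)\,\mu(dx)<\infty$; granting this, splitting the (now absolutely convergent) integral yields $G_y(a)=\int_\R q(y_0,y+ax)\,\mu(dx)-q(y_0,y)<\infty$.

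To establish that remaining finiteness I would exploit the geometry of $q(y_0,\cdot)$: in Case~1 we have $I=\R$, so $q(y_0,\cdot)$ is finite, continuous and convex on all of $\R$, strictly decreasing on $(-\infty,y_0)$ and strictly increasing on $(y_0,\infty)$. Fix any $b>a$. On the positive tail one has $y+ax\le y_0+bx$ as soon as $x\ge(y-y_0)/(b-a)$, with both arguments exceeding $y_0$, so monotonicity gives $q(y_0,y+ax)\le q(y_0,y_0+bx)$; symmetrically, on the negative tail $y+ax\ge y_0+bx$ once $x\le(y-y_0)/(b-a)$, with both arguments below $y_0$, and again $q(y_0,y+ax)\le q(y_0,y_0+bx)$. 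On the remaining bounded range of $x$ the argument $y+ax$ stays in a compact interval, on which $q(y_0,\cdot)$ is bounded by some constant $C$. Combining the three regions,
\[
\int_\R q(y_0,y+ax)\,\mu(dx)\le C+\int_\R q(y_0,y_0+bx)\,\mu(dx)=C+G_{y_0}(b)<\infty,
\]
the last step being exactly (A1).

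The one genuinely delicate point is this two-sided tail comparison: I must choose a single $b>a$ and use the monotonicity of $q(y_0,\cdot)$ on each side of $y_0$ to dominate the \emph{shifted} argument $y+ax$ by the \emph{centered} argument $y_0+bx$ in both tails at once. Everything else — finiteness of the constant and of the linear correction $q_x(y_0,y)$, and the vanishing of the linear term through the centering $\int x\,\mu(dx)=0$ — is routine bookkeeping.
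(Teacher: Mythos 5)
Your proof is correct and follows essentially the same route as the paper: the decomposition \eqref{eq:eiit10} with the centering of $\mu$ killing the linear term, followed by a tail comparison dominating $q(\bar y, y+ax)$ by $q(\bar y,\bar y+bx)$ for a larger scale (the paper takes $b=2a$ with threshold $|x|\ge |y-\bar y|/a$, you take an arbitrary $b>a$), plus boundedness of $q(\bar y,\cdot)$ on the remaining compact range. The only difference is that you spell out the middle region and the absolute-convergence bookkeeping, which the paper leaves implicit.
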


\begin{proof}
Let $\bar y \in I$ and suppose that $G_{\bar y}(a)<\infty$ for all $a>0$. Let $y \in I$ and notice that
$q(y,x) = q(\bar y,x) - q(\bar y,y) - q_x(\bar y,y) (x-y)$.
Since $\mu$ is centered, we have
\be
G_y(a)=
\int_{\bbR} q(y,y+ax)\,\mu(dx) = \int_{\bbR} q(\bar y, y+ax)\,\mu(dx) - q(\bar y, y).
\ee
For all $a >0$ and $x \in \R$ with $|x| \ge \frac{|y-\bar y|}{a}$ we have
\be
q(\bar y, y+ax) \le q(\bar y, \bar y + 2a x).
\ee
From this we obtain $G_y(a) < \infty$.
\end{proof}

The following theorem provides a solution to
Problem~(P) in Case~1.

\begin{theo}\label{existence_case1}
If (A1) is satisfied, then for all $N \in \IN$ there exists a unique scale factor $a_N$
satisfying
\ben\label{sf eq}
G_y( a_N(y)) = \frac{1}{N}, \quad y \in I.
\een
Moreover, the random walk $(Y^N_k)_{k \in \IZ_+}$ generated by $(X_k)$ with scale factor $a_N$ and starting point $m$ can be embedded into $M$ with a sequence of stopping times satisfying \eqref{mean time step}.
\end{theo}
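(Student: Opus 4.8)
The plan is to prove the two assertions separately: first the existence, uniqueness and measurability of the scale factor $a_N$, and then the embedding, which is obtained by iterating Theorem~\ref{th:eiit}. For the first part I would fix $y \in I$ and analyse the map $G_y$ on $[0,\infty)$. By Lemma~\ref{Eq condi A1}, condition (A1) gives $G_y(a)<\infty$ for every $a>0$, so $a_{\textrm{inf}}=\infty$ and the continuity statement recalled after~\eqref{defi Gy} shows that $G_y$ is continuous on all of $[0,\infty)$; together with $G_y(0)=0$ and strict monotonicity this makes $G_y$ a continuous strictly increasing bijection of $[0,\infty)$ onto $[0,G_y(\infty))$. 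The point specific to Case~1 is that $G_y(\infty)=\infty$: since $l=-\infty$ and $r=\infty$, Feller's test gives $q(y,l+)=q(y,r-)=\infty$, so $q(y,y+ax)\to\infty$ as $a\to\infty$ for each fixed $x\neq0$. Because $q(y,\cdot)$ is decreasing on $(l,y)$ and increasing on $(y,r)$, the integrand $a\mapsto q(y,y+ax)$ is nondecreasing in $a$ for every fixed $x$, so monotone convergence yields $\lim_{a\to\infty}G_y(a)=\int_\R \lim_{a\to\infty}q(y,y+ax)\,\mu(dx)=\infty$, where I use that $\mu\neq\delta_0$ forces $\mu(\{x\neq0\})>0$. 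The intermediate value theorem then produces a unique $a_N(y)\in(0,\infty)$ with $G_y(a_N(y))=1/N$, strictly positive since $G_y(0)=0<1/N$. For measurability I would observe that $(y,a)\mapsto G_y(a)=\int_\R q(y,y+ax)\,\mu(dx)$ is jointly Borel, and that strict monotonicity and continuity of $G_y$ give $\{y:a_N(y)<c\}=\{y:G_y(c)>1/N\}$ for every $c>0$; the latter set is Borel, so $a_N$ is a genuine (Borel) scale factor.

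For the embedding I would argue inductively, setting $\tau^N_0=0$ and, given an $(\cF_t)$-stopping time $\tau^N_k$, invoking the strong Markov property: the shifted process $\tilde M_t:=M_{\tau^N_k+t}$ is again a weak solution of~\eqref{sdeM} started at $M_{\tau^N_k}$ with respect to $\tilde\cF_t:=\cF_{\tau^N_k+t}$, with $\tilde\cF_0=\cF_{\tau^N_k}$. Theorem~\ref{th:eiit}(ii) applies to $\tilde M$ with the scale factor $a_N$, the hypothesis being satisfied because $Q(\tilde M_0)=G_{\tilde M_0}(a_N(\tilde M_0))=1/N<\infty$ a.s.\ by construction; it yields a $(\tilde\cF_t)$-stopping time $\rho_{k+1}$ with $\Law(\tilde M_{\rho_{k+1}}\mid\tilde\cF_0)=K(\tilde M_0,a_N(\tilde M_0),\cdot)$ and $E[\rho_{k+1}\mid\tilde\cF_0]=Q(\tilde M_0)=1/N$. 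Putting $\tau^N_{k+1}=\tau^N_k+\rho_{k+1}$ gives an $(\cF_t)$-stopping time with $E[\tau^N_{k+1}\mid\cF_{\tau^N_k}]=\tau^N_k+1/N$, which is~\eqref{mean time step}, and $\Law(M_{\tau^N_{k+1}}\mid\cF_{\tau^N_k})=K(M_{\tau^N_k},a_N(M_{\tau^N_k}),\cdot)$. Since this kernel depends on $\cF_{\tau^N_k}$ only through $M_{\tau^N_k}$ and coincides with the one-step transition law $y\mapsto\Law(y+a_N(y)X_{k+1})$ of the random walk from Definition~\ref{def:celsf1}, an induction on $k$ (both chains starting at $m$) shows $(M_{\tau^N_k})_{k\in\IZ_+}\stackrel{d}{=}(Y^N_k)_{k\in\IZ_+}$, which is exactly the asserted embedding.

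I expect the genuine analytic obstacle to lie in the first part, namely in establishing $G_y(\infty)=\infty$: this is where the inaccessibility of \emph{both} boundaries, special to Case~1, is essential, and it is what guarantees that every target value $1/N$ is actually attained. The embedding in the second part is, by contrast, a fairly mechanical iteration of Theorem~\ref{th:eiit} via the strong Markov property, the only care being to keep track of the conditional formulation $\Law(\cdot\mid\cF_0)$ and of measurability of the selected stopping times along the induction.
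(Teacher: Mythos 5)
Your proposal is correct and follows essentially the same route as the paper: the paper's Lemma~\ref{G bij} establishes exactly your first part (strict monotonicity, $\lim_{a\to\infty}G_y(a)=\infty$ by monotone convergence, continuity from (A1) via Lemma~\ref{Eq condi A1} and dominated convergence, hence bijectivity), and the embedding is likewise built by inductively applying Theorem~\ref{th:eiit} to the time-shifted process $M_{\tau^N_k+t}$ with one-step kernel $K(M_{\tau^N_k},a_N(M_{\tau^N_k}),\cdot)$ and conditional expected duration $1/N$. Your explicit check of Borel measurability of $a_N$ (via $\{y:a_N(y)<c\}=\{y:G_y(c)>1/N\}$) is a welcome detail that the paper leaves implicit.
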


\begin{remark}
It is worth noting that (A1) is satisfied
whenever $\mu$ has compact support.
\end{remark}

For the proof of the theorem we need the following auxiliary result.

\begin{lemma}\label{G bij}
If (A1) is satisfied, then $G_y$ is a bijective mapping from $[0,\infty)$ to $[0,\infty)$, for all $y\in I$.
\end{lemma}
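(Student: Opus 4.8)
The plan is to verify the three properties that make $G_y$ a bijection from $[0,\infty)$ onto $[0,\infty)$: finite-valuedness together with continuity, injectivity, and surjectivity, relying on the structural facts about $G_y$ collected just after~\eqref{defi Gy}. First I would invoke Lemma~\ref{Eq condi A1}: under (A1) we have $G_y(a)<\infty$ for every $y\in I$ and every $a>0$, so the set $\{a\in[0,\infty):G_y(a)=\infty\}$ is empty and hence $a_{\textrm{inf}}=\infty$. Consequently $G_y$ takes values in $[0,\infty)$ and is continuous on all of $[0,\infty)$, not merely off the point $a_{\textrm{inf}}$. Injectivity is then immediate from the strict monotonicity of $G_y(\cdot)$, and $G_y(0)=0$ has already been recorded. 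By continuity and the intermediate value theorem, surjectivity reduces to the single claim that $G_y(a)\to\infty$ as $a\to\infty$.

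This divergence is the substantive step. I would fix $x\in\R$ and observe that $a\mapsto q(y,y+ax)$ is nondecreasing: for $x>0$ the argument $y+ax$ increases through $(y,\infty)$, where $q(y,\cdot)$ is increasing, while for $x<0$ it decreases through $(-\infty,y)$, where $q(y,\cdot)$ is decreasing, so in both cases the value grows with $a$ (and for $x=0$ it is constantly $q(y,y)=0$). The decisive input is the boundary behaviour of $q$: since we are in Case~1 with $l=-\infty$ and $r=\infty$, Feller's test gives $q(y,l+)=q(y,r-)=\infty$, so $q(y,z)\to\infty$ as $z\to\pm\infty$, whence $\lim_{a\to\infty}q(y,y+ax)=\infty$ for every $x\neq0$. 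Applying the monotone convergence theorem to the representation $G_y(a)=\int_\R q(y,y+ax)\,\mu(dx)$ from~\eqref{defi Gy} yields
\[
\lim_{a\to\infty}G_y(a)=\int_\R\lim_{a\to\infty}q(y,y+ax)\,\mu(dx).
\]
Because $\mu\neq\delta_0$, we have $\mu(\R\setminus\{0\})>0$, so the integrand equals $+\infty$ on a set of positive $\mu$-measure and the integral is $\infty$. This establishes $G_y(a)\to\infty$ and hence surjectivity.

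I expect the last divergence step to be the only real obstacle, and within it the crucial point is that Case~1 forces both boundaries to lie at infinity, so that $q(y,\cdot)$ is unbounded at either end; this is exactly what prevents $G_y$ from saturating below some finite level and is why bijectivity onto the \emph{full} half-line $[0,\infty)$ holds here. The monotonicity of the integrand in $a$ is what lets me pass the limit inside the integral cleanly via monotone convergence, avoiding a Fatou argument together with a separate dominating bound.
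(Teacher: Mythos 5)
Your proof is correct and follows essentially the same route as the paper's: continuity of $G_y$ on all of $[0,\infty)$ from (A1), Lemma~\ref{Eq condi A1} and dominated convergence, strict monotonicity of $a\mapsto q(y,y+ax)$ for injectivity, and monotone convergence together with $q(y,z)\to\infty$ as $z\to\pm\infty$ and $\mu\ne\delta_0$ for surjectivity. One small attribution slip: the divergence of $q(y,\cdot)$ at $\pm\infty$ is not ``given by Feller's test'' (Feller's test uses this fact to conclude non-attainability of the boundaries); it follows from the convexity and strict monotonicity of $q(y,\cdot)$ on $(l,y)$ and $(y,r)$ with $l=-\infty$, $r=\infty$, which the paper simply asserts --- but this does not affect the validity of your argument.
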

\begin{proof}
Notice that $\lim_{x \to \pm \infty} q(y,x+y) = \infty$. Moreover, if $0 \le a < b$ and $x \ne 0$, then $q(y,y+ax) <q(y,y+bx)$. Therefore, $G_y$ is strictly increasing and by monotone convergence, $\lim_{a\to \infty} G_y(a) = \infty$.  Condition (A1), Lemma \ref{Eq condi A1} and a dominated convergence argument show that $G_y$ is continuous, and consequently, bijective.
\end{proof}

\begin{proof}[Proof of Theorem \ref{existence_case1}]
Let $N \in \IN$. Lemma \ref{G bij} implies that for all $y \in I$ there exists a scale factor $a_N(y)$ that satisfies \eqref{sf eq}.


We next define a sequence of stopping times $(\tau(k))_{k \in \IZ_+}$ that embeds the transition probabilities into the diffusion $M$. First define $\tau(0)= 0$. Suppose that $\tau(k)$ is already defined. Set $X_t = M_{t+ \tau(k)}$, for $t \ge 0$, and observe that
\be
dX_t = \eta(X_t) \, d(W_{t+\tau(k)} - W_{\tau(k)}), \quad X_0 = M_{\tau(k)}.
\ee
Theorem \ref{th:eiit} implies that there exists an $(\cF_{t+\tau(k)})$-stopping time $\rho(k+1)$ with
\be
E[\rho(k+1)|\cF_{\tau(k)}] = Q(M_{\tau(k)}) = G_{M_{\tau(k)}}(a_N(M_{\tau(k)})) = \frac{1}{N}
\ee
such that $X_{\rho(k+1)} \stackrel{d}{=} Y^N_k + a_N(Y^N_k) X_{k+1}$.
Now define $\tau(k+1) = \tau(k) + {\rho(k+1)}$. By construction, the sequence $(M_{\tau(k)})_{k \in \IZ_+}$ has the same distribution as $(Y^N_k)_{k \in \IZ_+}$.
\end{proof}

The next example shows that a scale factor
satisfying~\eqref{sf eq} does not necessarily exist  if (A1) does not hold true.

\begin{ex}
\label{ex:celsf1}
Let $\mu$ be the probability measure with density $f(x) = c \frac{e^{-|x|}}{1+x^2}$, where $c = \left( \int\frac{e^{-|x|}}{1+x^2} dx \right)^{-1}$. Moreover let $m = 0$ and $\eta(x) =  e^{-x/2}$, $x \in \R$. Then we have $q(y,x)=2e^y(e^{x-y}-(x-y)-1)$. A straightforward calculation shows that $G_y(a) = \infty$ for $a > 1$. Therefore Condition (A1) is not satisfied. Moreover, for $a=1$ we have that
\be
G_y(1)=2e^yc\int_\R (e^x-x-1)\frac{e^{-|x|}}{1+x^2}dx<\infty.
\ee
By considering the limit $y\to -\infty$ we see that for every $N\in \IN$ there exists $y\in \IR$ such that $G_y(1)<1/N$. In particular, there exists no solution to \eqref{sf eq}.
\end{ex}

\subsection{Case 2: $l > -\infty$ and $r = \infty$}
\label{subsec:2306a2}
Here we impose the following assumption.
\bi
\item[(A2)] $\inf \supp \mu>-\infty$ and there exists $y\in I$ such that the integral over the positive real line $\int_{\bbR_+} q(y,y+ax)\,\mu(dx)<\infty$ for all $a>0$.
\ei

For every $y>l$ we set $\bar a(y)=\frac{l-y}{\inf \supp \mu}$. Note that for all $a\le \ol{a}(y)$ we have $a \supp \mu\subset [l-y,\infty)$. In the following we use the short-hand notation $q(x) = q(m,x)$, $x \in \R$.

We now present a solution to Problem~(P) in Case~2.

\begin{theo}\label{existence_case2}
Suppose that (A2) is satisfied and additionally that the following implications hold true:
\ben
&\text{if } q(l+)<\infty,& \text{ then } \mu(\{\inf \supp \mu\})>0, \label{cond a2l}\\
&\text{if } q(l+)=\infty,& \text{ then }
\liminf_{y\to\infty}G_y(\bar a(y))>0
\text{ and }
\liminf_{y\searrow l}G_y(\bar a(y))>0.
\label{cond a3lrinfty}
\een
Then there exists $N_0\in \IN$ 
such that for all $N \ge N_0$ there exists a unique scale factor $a_N$
satisfying
\ben\label{sf eq2}
a_N(y) = \sup\left\{a \in [0, \infty): G_y(a) \le \frac{1}{N} \right\}, \quad y \in I,
\een
and $a_N(l)=0$.
Moreover, the random walk $(Y^N_k)_{k \in \IZ_+}$, generated by $(X_k)$ with scale factor $a_N$ and starting in $m$, can be embedded in $M$ with stopping times satisfying~\eqref{mean time step}.

In the case $q(l+)<\infty$, we can take $N_0=1$;
while in the case $q(l+)=\infty$,
the scale factor $a_N$ of~\eqref{sf eq2}
satisfies~\eqref{sf eq} for all $y\in I$ and $N\ge N_0$
(here $N_0\ge1$ can be necessary).
\end{theo}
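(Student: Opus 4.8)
The plan is to split the argument according to whether the left boundary is accessible, i.e.\ according to whether $q(l+)<\infty$ or $q(l+)=\infty$, since the two regimes demand genuinely different embedding constructions. Throughout I would write $s_0:=\inf\supp\mu$, so that $s_0\in(-\infty,0)$ (finite by (A2), negative because $\mu$ is centered and $\mu\ne\delta_0$), and recall $\bar a(y)=(l-y)/s_0>0$ for $y\in I$. First I would pin down the shape of $G_y$. For $a<\bar a(y)$ every point of $\supp K(y,a,\cdot)$ lies strictly inside $I$, so $q(y,y+ax)<\infty$ pointwise; combining (A2) for the right tail with the boundedness of $q(y,\cdot)$ on the compact interval $[y+as_0,y]$ for the left tail gives $G_y(a)<\infty$. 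For $a>\bar a(y)$ the set $\{x:\,y+ax<l\}$ carries positive $\mu$-mass (since $s_0=\inf\supp\mu$ lies in the support), whence $G_y(a)=\infty$. Thus $a_{\mathrm{inf}}=\bar a(y)$, and $G_y$ is continuous and strictly increasing from $0$ on $[0,\bar a(y))$, while the boundary value $G_y(\bar a(y))$ (computed by monotone convergence) is finite exactly when $q(l+)<\infty$, or when $q(l+)=\infty$, $\mu(\{s_0\})=0$ and the blow-up of $q$ at $l$ is $\mu$-integrable near $s_0$. In all cases $a_N(y):=\sup\{a:\,G_y(a)\le 1/N\}$ is a well-defined Borel function with $a_N(y)\in(0,\bar a(y)]$ (using $G_y(0)=0<1/N$ and continuity near $0$); the bound $a_N(y)\le\bar a(y)$ forces $y+a_N(y)x\ge y+\bar a(y)s_0=l$, so the generated walk stays in $[l,\infty)$, and setting $a_N(l)=0$ makes $l$ absorbing for the walk, matching the diffusion.

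In the case $q(l+)=\infty$ the goal is $G_y(a_N(y))=1/N$ for every $y$, for which it suffices to produce $N_0$ with $1/N_0\le\inf_{y\in I}G_y(\bar a(y))$: for $N\ge N_0$ strict monotonicity and left-continuity of $G_y$ on $[0,\bar a(y)]$ then give a unique root of $G_y(a)=1/N$ in $(0,\bar a(y)]$, which coincides with the sup defining $a_N(y)$. The heart of this case is the strictly positive uniform lower bound $\inf_{y\in I}G_y(\bar a(y))>0$. I would obtain it from three ingredients: pointwise positivity $G_y(\bar a(y))>0$ (because $q(y,\cdot)>0$ off $y$ and $\mu\ne\delta_0$); lower semicontinuity of $y\mapsto G_y(\bar a(y))$, proved by Fatou's lemma using the joint continuity of $q$ where finite and the continuity of $y\mapsto\bar a(y)$; and the two hypotheses in \eqref{cond a3lrinfty}, which keep $G_y(\bar a(y))$ bounded away from $0$ as $y\searrow l$ and as $y\to\infty$. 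The liminf conditions confine any possible degeneracy to a compact subinterval $[\alpha,\beta]\subset I$, on which a positive lower-semicontinuous function attains a positive minimum; this yields the uniform bound and hence $N_0$. For $N\ge N_0$ the minimal-time embedding of Theorem~\ref{th:eiit}, applied iteratively along the shifted diffusions exactly as in the proof of Theorem~\ref{existence_case1}, produces stopping times with conditional expectation $G_{M_{\tau_k}}(a_N(M_{\tau_k}))=1/N$, giving both \eqref{mean time step} and \eqref{sf eq}.

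In the case $q(l+)<\infty$ the left boundary is accessible and \eqref{cond a2l} supplies an atom $\mu(\{s_0\})>0$; here $N_0=1$ works but $G_y(a_N(y))=1/N$ may fail. For a step started at $y$ there are two possibilities. If $1/N\le G_y(\bar a(y))$ then $G_y(a_N(y))=1/N$ and the minimal embedding already has conditional expectation $1/N$. If instead $1/N>G_y(\bar a(y))$, then necessarily $a_N(y)=\bar a(y)$ (by continuity of $G_y$ up to $\bar a(y)$), the target $K(y,\bar a(y),\cdot)$ places mass $\mu(\{s_0\})>0$ at the absorbing point $l$, and the minimal embedding $\rho$ uses only expected time $G_y(\bar a(y))<1/N$. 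I would then top up the time by replacing $\rho$ with $\rho+c\,\mathbf{1}_{\{M_\rho=l\}}$, where $c=(1/N-G_y(\bar a(y)))/\mu(\{s_0\})\ge 0$: since $M$ is absorbed at $l$, the event $\{M_\rho=l\}$ is $\cF_\rho$-measurable, the modified time is again a stopping time with the same terminal law, and its conditional expectation is exactly $1/N$. Iterating these single-step embeddings along the shifted diffusions yields \eqref{mean time step}.

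I expect the main obstacle to be the uniform lower bound $\inf_{y\in I}G_y(\bar a(y))>0$ in the inaccessible case: pointwise positivity is immediate and the behaviour at the two ends is handed to us by \eqref{cond a3lrinfty}, but controlling the interior relies on the lower-semicontinuity argument, whose only subtle point is the unboundedness of the integrand as $x\searrow s_0$, which Fatou's lemma accommodates because only a lower bound is needed. The accessible case is conceptually simpler once one notices that the compulsory overshoot mass sits precisely on the absorbing boundary and can therefore serve as a reservoir for the missing expected time.
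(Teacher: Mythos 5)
Your proposal is correct and follows essentially the same route as the paper's proof: the same case split on $q(l+)$, the same construction in the accessible case (minimal embedding from Theorem~\ref{th:eiit} topped up by a deterministic extra time on the event $\{M_\rho=l\}$, using the atom $\mu(\{\inf\supp\mu\})>0$ as the reservoir), and the same uniform bound $\inf_{y\in I}G_y(\bar a(y))>0$ in the inaccessible case, obtained from \eqref{cond a3lrinfty} combined with Fatou-based lower semicontinuity of $y\mapsto G_y(\bar a(y))$ (the paper's Lemma~\ref{lem:0706a1}), after which \eqref{sf eq} holds and the iteration of Theorem~\ref{existence_case1} applies. The only differences are presentational: you make explicit why $G_y(a)=\infty$ for $a>\bar a(y)$ and when $G_y(\bar a(y))$ is finite, points the paper leaves implicit.
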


\begin{remark}
It is worth noting that the assumptions
of Theorem~\ref{existence_case2} are satisfied
whenever $\mu$ has compact support and
$\mu(\{\inf\supp\mu\})>0$
(see Proposition~\ref{suffcond_1}).
\end{remark}

\begin{proof}
From similar arguments as in the proof of Lemma~\ref{Eq condi A1} it follows that condition~(A2) implies $\int_{\bbR_+} q(y,y+ax)\,\mu(dx)<\infty$ for all $y\in I$ and $a>0$. Notice that the sup in~\eqref{sf eq2} is attained, since $G_y$ is left-continuous. As in the proof of Lemma~\ref{G bij} one can show that $G_y:[0,\bar a(y)]\to [0,G_y(\bar a(y))]$ is bijective.

Now assume $q(l+)<\infty$. \eqref{cond a2l} implies that $w = \mu(\{\inf \supp \mu\})$ is positive. Let
$N\in\N$ and
$\nu_N(y, B) = K(y,a_N(y),B)$ for $y \in I$ and $B \in \cB(\R)$. Similarly to the proof of Theorem \ref{existence_case1} we construct a sequence of stopping times $(\tau(k))_{k \in \IZ_+}$ embedding the transition probabilities into $M$. Let $\tau(0)= 0$. Suppose that $\tau(k)$ is defined. 
By Theorem \ref{th:eiit} there exists a stopping time $\rho(k+1)$ that embeds $\nu_N(M_{\tau(k)}, \cdot)$ into the process $X_t = M_{t + \tau(k)}$, $t \ge 0$, and satisfies
\be
E[\rho(k+1)|\cF_{\tau(k)}] = Q(M_{\tau(k)}) = G_{M_{\tau(k)}}(a_N(M_{\tau(k)})) \le \frac{1}{N}.
\ee
Define $\tau(k+1) = \tau(k) +\rho(k+1)$ if $M_{\tau(k) + \rho(k+1)} > l$, and $\tau(k+1) = \tau(k) +\rho(k+1) + \frac{1}{w}[\frac{1}{N} - Q(M_{\tau(k)})]$ if $M_{\tau(k) + \rho(k+1)} = l$. Then we have
\be
E[\tau(k+1) | \cF_{\tau(k)}]
&=& \tau(k) + Q(M_{\tau(k)}) + \frac{1}{w}\left[\frac{1}{N} - Q(M_{\tau(k)})\right]P(M_{\tau(k) + \rho(k+1)} = l | \cF_{\tau(k)}) \\
&=& \tau(k) + \frac{1}{N}.
\ee

Next assume that $q(l+)=\infty$.
Due to~\eqref{cond a3lrinfty} and
Lemma~\ref{lem:0706a1} below,
we have
$$
\inf_{y\in I}G_y(\bar a(y))>0.
$$
Choosing $N_0\in \IN$ such that $1/{N_0}<\inf_{y\in I}G_y(\bar a(y))$ yields that for every $N\ge N_0$ and $y\in I$ we have $G_y(a_N(y)) = \frac{1}{N}$. The rest of the proof goes along the lines of the proof of Theorem~\ref{existence_case1}.
\end{proof}

\begin{lemma}
\label{lem:0706a1}
Suppose (A2).

(i) The function $y\mapsto G_y(\bar a(y))$
is a lower semicontinuous function
$I\to(0,\infty]$.

(ii) For any compact subinterval
$J\subset I$, we have
$$
\inf_{y\in J}G_y(\bar a(y))>0.
$$
\end{lemma}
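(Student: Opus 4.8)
The plan is to deduce (ii) from (i): once $F:=G_{\cdot}(\bar a(\cdot))$ is known to be lower semicontinuous with values in $(0,\infty]$, it attains its infimum on any compact $J\subset I$ at some point $y^\ast$, and then $\inf_{y\in J}F(y)=F(y^\ast)>0$. Hence the heart of the matter is the lower semicontinuity asserted in~(i). Note first that (A2) guarantees $\inf\supp\mu>-\infty$, while the fact that $\mu$ is centered with $\mu\neq\delta_0$ forces $\inf\supp\mu<0$; thus $\bar a(y)=\frac{l-y}{\inf\supp\mu}$ is well defined, finite and strictly positive for every $y\in I$.

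For (i) I would write $F(y)=\int_{\R}\phi(y,x)\,\mu(dx)$ with $\phi(y,x):=q(y,y+\bar a(y)x)\ge0$. Positivity of $F$ is immediate, since $G_y$ is strictly increasing with $G_y(0)=0$ and $\bar a(y)>0$, so $F(y)=G_y(\bar a(y))>0$; thus $F$ maps $I$ into $(0,\infty]$. For the semicontinuity I would invoke Fatou's lemma: for any sequence $y_n\to y_0$ in $I$,
\[
\liminf_n F(y_n)=\liminf_n\int_{\R}\phi(y_n,x)\,\mu(dx)\ge\int_{\R}\liminf_n\phi(y_n,x)\,\mu(dx),
\]
so it suffices to show that $y\mapsto\phi(y,x)$ is continuous for $\mu$-a.e.\ $x$, because then $\liminf_n\phi(y_n,x)=\phi(y_0,x)$ and the right-hand side equals $F(y_0)$.

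The continuity of the integrand is the step requiring care. Writing $q(y,x)=H(x)-H(y)-h(y)(x-y)$ with $h(u)=\int_{x_0}^u\frac{2}{\eta^2(z)}\,dz$ and $H(v)=\int_{x_0}^v h(u)\,du$ for a fixed reference point $x_0\in I$ exhibits $q$ as jointly continuous on $I\times I$. For any fixed $x>\inf\supp\mu$ the defining property of $\bar a$ gives $y+\bar a(y)x>l$, hence $y+\bar a(y)x\in I$; as $y\mapsto y+\bar a(y)x$ is affine and stays in $I$, the map $y\mapsto\phi(y,x)$ is continuous. The only delicate value is $x=\inf\supp\mu$, where $y+\bar a(y)x=l$ and hence $\phi(y,x)=q(y,l+)$; here I would use that the non-explosion condition does not depend on $y$: if $q(l+)=\infty$ then $\phi(\cdot,\inf\supp\mu)\equiv\infty$ is trivially continuous, whereas if $q(l+)<\infty$ then $q(y,l+)=H(l+)-H(y)-h(y)(l-y)$ is continuous in $y$, because $H(l+)$ is a finite constant and $h,H$ are continuous on $I$. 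Since $\supp\mu\subset[\inf\supp\mu,\infty)$, continuity of $y\mapsto\phi(y,x)$ then holds for $\mu$-a.e.\ $x$, which closes the Fatou argument and establishes~(i).

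Finally, (ii) follows as indicated: a lower semicontinuous function on the compact set $J$ attains its infimum at some $y^\ast\in J$, and since $F$ is $(0,\infty]$-valued this infimum equals $F(y^\ast)>0$. I expect the boundary value $x=\inf\supp\mu$ in~(i) --- where the integrand reaches the potentially explosive endpoint $l$ --- to be the only genuine obstacle, and it is handled precisely by the $y$-independence of Feller's non-explosion condition recalled earlier.
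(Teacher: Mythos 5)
Your proof is correct and follows essentially the same route as the paper: positivity from the strict monotonicity of $G_y$, lower semicontinuity via Fatou's lemma, and (ii) deduced from (i) because a lower semicontinuous $(0,\infty]$-valued function attains its infimum on a compact set. The only difference is one of detail: the paper's Fatou step simply asserts $\liminf_{y\to y_0} q\bigl(y,y+(y-l)x\bigr)=q\bigl(y_0,y_0+(y_0-l)x\bigr)$ without comment, whereas you verify this continuity of the integrand explicitly --- including the delicate endpoint case $x=\inf\supp\mu$, handled via the $y$-independence of the finiteness of $q(\cdot,l+)$ --- which is a gap-filling refinement of, not a departure from, the paper's argument.
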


\begin{proof}
To simplify notation we assume that $\inf\supp\mu=-1$.
For $y>l$, we have $G_y(\bar a(y))>0$,
$\bar a(y)=y-l$ and
$g(y):=G_y(\bar a(y))=\int q(y,y+(y-l)x)\,\mu(dx)$.
Since $q(y,\cdot)$ nonnegative,
Fatou's lemma yields for $y_0 \in I$
\begin{equation}\label{eq:fatou}
\liminf_{y\to y_0} g(y)\ge \int \liminf_{y\to y_0}
q(y,y+(y-l)x)\,\mu(dx)=g(y_0).
\end{equation}
This proves the first statement.
The second statement
immediately follows from the first one.
\end{proof}

\begin{remark}
\label{rem:0706a1}
In comparison with Case~1
the condition that determines the scale factor
changes from~\eqref{sf eq}
to a less pleasant one~\eqref{sf eq2}.
Notice that in Case~1,
conditions \eqref{sf eq} and~\eqref{sf eq2} are equivalent.
As stated in Theorem~\ref{existence_case2},
in Case~2 with $q(l+)=\infty$
again \eqref{sf eq} holds true.
Example~\ref{ex:11102014a1} below shows that,
in Case~2 with $q(l+)<\infty$,
it can indeed happen that
the scale factor does not satisfy~\eqref{sf eq}
any longer.
However, by Lemma~\ref{lem:0706a1}~(ii),
the following statement holds:
\bi
\item[(Eq)]
Suppose (A2) and, for $N\in\bbN$,
consider the scale factor $a_N$ satisfying~\eqref{sf eq2}.
Then, for any compact subinterval $J\subset I$,
there exists $N_1\in\bbN$ such that,
for all $N\ge N_1$,
the scale factor $a_N$ satisfies~\eqref{sf eq}
on~$J$.
\ei
\end{remark}

\begin{ex}\label{ex:11102014a1}
Let $M$ be a Brownian motion starting at $m>0$
and absorbed as it reaches zero,
i.e.\ we have $I=(0,\infty)$
and $\eta\equiv1$.
Let $\mu=\frac12(\delta_{-1}+\delta_1)$.
A short computation shows that,
for $y>0$,
\be
G_y(a)=\begin{cases}
a^2&\text{if }a\in[0,y],\\
\infty&\text{if }a\in(y,\infty).
\end{cases}
\ee
Hence, $a_N(y)=\frac1{\sqrt{N}}\wedge y$,
and \eqref{sf eq} fails whenever
$y\in(0,\frac1{\sqrt{N}})$.
\end{ex}

While \eqref{cond a2l} is a condition on the primitives of our problem, \eqref{cond a3lrinfty} is harder to verify.
In the sequel we present sufficient conditions on $\mu$ (Proposition~\ref{suffcond_1}) and $\eta$ (Proposition~\ref{suffcond_2}) that imply~\eqref{cond a3lrinfty}.

\begin{propo}\label{suffcond_1}
Suppose (A2).
If $\mu(\{\inf \supp \mu\})>0$, then \eqref{cond a3lrinfty} is satisfied. Moreover, Theorem~\ref{existence_case2} applies with $N_0=1$.
\end{propo}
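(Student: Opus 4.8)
The plan is to reduce everything to a single observation about where the infimum of the support of $\mu$ is sent under the map $x\mapsto y+\bar a(y)x$. Write $s=\inf\supp\mu$ and $w=\mu(\{s\})$. By (A2) we have $s>-\infty$, and since $\mu$ is centered with $\mu\ne\delta_0$ we must have $s<0$, so that $\bar a(y)=\frac{l-y}{s}>0$ is well defined for every $y>l$. The crucial identity is
$$
y+\bar a(y)\,s=y+\frac{l-y}{s}\,s=l,
$$
i.e.\ the atom of $\mu$ at $s$ is mapped exactly onto the left boundary~$l$, and it carries mass $w>0$ by hypothesis.

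First I would dispose of \eqref{cond a3lrinfty}. This condition is an implication with hypothesis $q(l+)=\infty$, so it is vacuously true when $q(l+)<\infty$. Assume therefore $q(l+)=\infty$. By Feller's test (as recalled after~\eqref{eq:eiit10}) the non-explosion condition $q(y,l+)=\infty$ does not depend on $y$, so $q(y,l+)=\infty$ for every $y\in I$, and hence the boundary value satisfies $q(y,l)=q(y,l+)=\infty$. Isolating the atom at $s$ in the defining integral of $G_y(\bar a(y))$ and using $q\ge0$, I would bound
$$
G_y(\bar a(y))=\int_\R q\bigl(y,y+\bar a(y)x\bigr)\,\mu(dx)\ge q\bigl(y,y+\bar a(y)s\bigr)\,w=q(y,l)\,w=\infty
$$
for every $y\in I$. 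In particular both $\liminf_{y\to\infty}G_y(\bar a(y))$ and $\liminf_{y\searrow l}G_y(\bar a(y))$ equal $+\infty>0$, which is exactly~\eqref{cond a3lrinfty}.

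It then remains to invoke Theorem~\ref{existence_case2} and to track the constant $N_0$. The standing hypothesis (A2) holds, \eqref{cond a3lrinfty} has just been verified, and \eqref{cond a2l} is immediate because its conclusion $\mu(\{\inf\supp\mu\})>0$ is precisely the assumption of the proposition; hence the theorem applies. For the value $N_0=1$ I would again split on $q(l+)$. When $q(l+)<\infty$ the theorem itself states that one may take $N_0=1$. When $q(l+)=\infty$, the computation above gives $G_y(\bar a(y))=\infty$ for all $y\in I$, so $\inf_{y\in I}G_y(\bar a(y))=\infty$; recalling from the proof of Theorem~\ref{existence_case2} that $N_0$ only needs to satisfy $1/N_0<\inf_{y\in I}G_y(\bar a(y))$, the choice $N_0=1$ is admissible. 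This proves the proposition.

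The only genuinely delicate point is the identification $q(y,l)=q(y,l+)=\infty$ when $q(l+)=\infty$, together with the $y$-independence of this property: one must make sure that plugging the boundary value $x=l$ into $q(y,\cdot)$ really yields the monotone left limit $q(y,l+)$, and that Feller's test guarantees this blow-up simultaneously for all $y$. Once this is in hand, the atom at $s$ forces the integral defining $G_y(\bar a(y))$ to diverge uniformly in $y$, and both assertions of the proposition follow without any further estimate.
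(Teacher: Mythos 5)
Your proof is correct and follows essentially the same route as the paper: the paper's one-line argument is precisely that, when $q(l+)=\infty$, the atom of $\mu$ at $\inf\supp\mu$ is mapped onto $l$ by the scaling $x\mapsto y+\bar a(y)x$, forcing $G_y(\bar a(y))=\infty$ for all $y\in I$, which yields both \eqref{cond a3lrinfty} and the admissibility of $N_0=1$ (the case $q(l+)<\infty$ being vacuous for \eqref{cond a3lrinfty} and covered by the last assertion of Theorem~\ref{existence_case2}). Your additional verifications --- the identification $q(y,l)=q(y,l+)$ by monotone convergence and the $y$-independence of the divergence --- are exactly the details the paper leaves implicit.
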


\begin{proof}
 From $q(l+)=\infty$ and $\mu(\{\inf \supp \mu\})>0$ it follows that $G_y(\bar a(y))=\infty$ for all $y\in I$, which implies the claims.
\end{proof}

\begin{propo}\label{suffcond_2}
Under (A2) assume that
$$
\limsup_{x\searrow l}\frac {|\eta(x)|}{x-l}<\infty
\quad\text{and}\quad
\limsup_{x\nearrow \infty}\frac {|\eta(x)|}{x}<\infty.
$$
Then \eqref{cond a3lrinfty} is satisfied.
\end{propo}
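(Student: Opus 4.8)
The plan is to establish \eqref{cond a3lrinfty} by treating the case named in its antecedent, $q(l+)=\infty$; both of the required liminfs are then proved by the same mechanism. As in the proof of Lemma~\ref{lem:0706a1} I normalize $\inf\supp\mu=-1$, so that $\bar a(y)=y-l$ and $G_y(\bar a(y))=\int q(y,y+(y-l)x)\,\mu(dx)$. The key computational device is the change of variables $u=y+(y-l)t$, $z=y+(y-l)s$ in the double integral defining $q$, which yields
\[
q(y,y+(y-l)x)=2(y-l)^2\int_0^x\!\int_0^t\frac{ds\,dt}{\eta^2\bigl(y+(y-l)s\bigr)}.
\]
Since $y+(y-l)s=l+(y-l)(1+s)$, the argument of $\eta$ sits near $l$ when $y\searrow l$ and tends to $+\infty$ when $y\to\infty$; this is precisely where the two growth hypotheses are used.

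Next I turn the hypotheses into bounds: there are constants $C,C'>0$ and thresholds $R,\delta>0$ with $|\eta(w)|\le Cw$ for $w\ge R$ and $|\eta(w)|\le C'(w-l)$ for $w\in(l,l+\delta)$. For $y\searrow l$, once $y-l$ is small enough that the relevant arguments lie in $(l,l+\delta)$, the near-$l$ bound gives $\eta^2(y+(y-l)s)\le C'^2(y-l)^2(1+s)^2$; the factor $(y-l)^2$ cancels and I obtain, for every fixed $x>-1$,
\[
q(y,y+(y-l)x)\ge\frac{2}{C'^2}\int_0^x\!\int_0^t\frac{ds\,dt}{(1+s)^2}=\frac{2}{C'^2}\bigl(x-\ln(1+x)\bigr).
\]
For $y\to\infty$, the near-$\infty$ bound gives $\eta^2\le C^2\bigl(y+(y-l)s\bigr)^2$ for $y\ge R$, and computing the limit of the resulting integral shows $\liminf_{y\to\infty}q(y,y+(y-l)x)\ge\frac{2}{C^2}\bigl(x-\ln(1+x)\bigr)$. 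Both lower bounds hold for all $x>-1$.

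Finally, since $q\ge0$, Fatou's lemma transfers these pointwise bounds to $G_y$, giving $\liminf_{y\searrow l}G_y(\bar a(y))\ge\frac{2}{C'^2}\int\bigl(x-\ln(1+x)\bigr)\mu(dx)$ and the analogue with $\frac{2}{C^2}$ as $y\to\infty$. It remains to see that $\int\bigl(x-\ln(1+x)\bigr)\mu(dx)>0$: the map $x\mapsto\ln(1+x)$ is strictly concave on $(-1,\infty)$, so Jensen's inequality together with $\int x\,\mu(dx)=0$ yields $\int\ln(1+x)\,\mu(dx)<\ln 1=0$, strictly because $\mu\ne\delta_0$; equivalently $\int\bigl(x-\ln(1+x)\bigr)\mu(dx)=-\int\ln(1+x)\,\mu(dx)>0$. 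Hence both liminfs are strictly positive, which is exactly \eqref{cond a3lrinfty}.

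The main obstacle is the localized character of the two growth bounds: each is valid only where the argument of $\eta$ is near the corresponding boundary, so the estimates are not uniform in $x$ across the (possibly unbounded) support of $\mu$, and one is forced into a pointwise-in-$x$ Fatou argument rather than a single uniform estimate. Secondary care is needed in the range $x\in(-1,0)$, where $y+(y-l)x<y$ so the triangular double integral must be reoriented before one can confirm that the same closed form $x-\ln(1+x)$ (which is nonnegative on $(-1,\infty)$) appears; and in the $y\to\infty$ case, where the clean lower bound emerges only in the limit and needs a dominated-convergence justification on the bounded $t$-interval.
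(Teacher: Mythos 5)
Your proof is correct and follows essentially the same route as the paper's: the same normalization $\inf\supp\mu=-1$, the same change of variables turning $q(y,y+(y-l)x)$ into a double integral with integrand $(y-l)^2/\eta^2\bigl(y+(y-l)s\bigr)$, pointwise-in-$x$ liminf lower bounds derived from the two growth hypotheses, and a concluding Fatou argument with respect to $\mu$. The only cosmetic difference is that you make the estimates quantitative (explicit constants, the closed form $x-\ln(1+x)$, and Jensen's inequality for strict positivity), whereas the paper argues qualitatively, applying Fatou a second time inside the double integral and using that the resulting pointwise liminf is positive on a set of positive $\mu$-measure.
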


\begin{proof}
To simplify notation we assume that $\inf\supp\mu=-1$. For $y>l$ we have $\bar a(y)=y-l$ and $g(y):=G_y(\bar a(y))=\int h(x,y)\,\mu(dx)$
with $h(x,y):=q(y,y+(y-l)x)$.
We need to show that $\liminf_{y\to y_0} g(y)>0$ for $y_0\in \{l,\infty\}$. Note that
\begin{equation}\label{eq:int_rep_h}
 h(x,y)=2\int_0^x\int_0^u\frac{(y-l)^2}{\eta^2((y-l)z+y)}
 \,dz\,du.
\end{equation}
We have for every $z>-1$
\[
\liminf_{y\searrow l}\left(\frac{y-l}{|\eta((y-l)z+y)|}\right)=\frac 1{z+1}\liminf_{y\searrow l}\left(\frac{(y-l)z+y-l}{|\eta((y-l)z+y)|}\right)=\frac 1{z+1}\liminf_{x\searrow l}\left(\frac{x-l}{|\eta(x)|}\right)>0
\]
and
\[
\liminf_{y\nearrow \infty}\left(\frac{y-l}{|\eta((y-l)z+y)|}\right)=\frac 1{z+1}\liminf_{x\nearrow \infty}\left(\frac{x-l}{|\eta(x)|}\right)=\frac 1{z+1}\liminf_{x\nearrow \infty}\left(\frac{x}{|\eta(x)|}\right)>0.
\]
Thus, for $y_0\in \{l,\infty\}$, applying Fatou's lemma
in~\eqref{eq:int_rep_h} (observe that the area of integration is positively oriented also for $x\le 0$) yields $\liminf_{y\to y_0}h(x,y)>0$ for every $x\in(-1,\infty)\setminus\{0\}$.
Now the argument similar to~\eqref{eq:fatou}
yields the claim.
\end{proof}

\begin{remark}
We can replace the conditions
$\limsup_{x\searrow l}\frac {|\eta(x)|}{x-l}<\infty$ and $\limsup_{x\nearrow \infty}\frac {|\eta(x)|}{x}<\infty$
in the formulation of Proposition~\ref{suffcond_2}
by the weaker conditions $\liminf_{y\searrow l}h(\cdot,y)>0$ and $\liminf_{y\nearrow \infty}h(\cdot,y)>0$ on a set of positive mass with respect to $\mu$, where $h$ is defined as in the proof of
Proposition~\ref{suffcond_2}.
\end{remark}

Let us illustrate in more detail
how the assumptions in
Theorem~\ref{existence_case2}
work when $q(l+)=\infty$.
Recall that, in the case $q(l+)=\infty$,
the scale factor $a_N$ satisfies~\eqref{sf eq}
(not only~\eqref{sf eq2}).
If, however, (A2) does not hold true,
then a scale factor satisfying~\eqref{sf eq}
does not necessarily exist.
This can be shown by means of an example
similar to Example~\ref{ex:celsf1}.
The role of condition~\eqref{cond a3lrinfty} is
as follows.
Together with~(A2) it guarantees that,
in the case $q(l+)=\infty$,
there is a scale factor satisfying~\eqref{sf eq}.
Examples~\ref{ex:celsf2} and~\ref{ex:celsf3}
below show that \eqref{cond a3lrinfty}
can fail and a scale factor satisying~\eqref{sf eq}
does not necessarily exist
when we require (A2) alone.

\begin{ex}
\label{ex:celsf2}
Let us consider the constant elasticity of variance (CEV) process with \(\alpha>1\),
i.e. \(l=0\), \(r=\infty\) and \(\eta(x)=x^\alpha\)
on~\(I\).
For \(y>0\), we have
\ben\label{eq:celsf11}
q(y,x)=\begin{cases}
\frac2{2\alpha-1}\left[
\frac1{2\alpha-2}\left(
\frac1{x^{2\alpha-2}}
-\frac1{y^{2\alpha-2}}
\right)
+\frac{x-y}{y^{2\alpha-1}}
\right]&\text{if }x\geq0,\\
\infty&\text{if }x<0,
\end{cases}
\een
in particular, $q(y,0+)=\infty$.
We see that any centered measure
$\mu\ne\delta_0$
with $\inf\supp\mu>-\infty$
satisfies~(A2).

Notice that
$q\left(y/a,x/a\right)=a^{2\alpha-2}q(y,x)$
for $a,x,y>0$.
With $b=-\inf\supp\mu>0$
we now calculate
$\bar a(y)=y/b$ and
\be
G_y(\bar a(y))=\frac1{y^{2\alpha-2}}
\int_{\R}q\left(1,1+\frac xb\right)\,\mu(dx)
=\frac{b^{2\alpha-2}}{y^{2\alpha-2}}
\int_{\R}q\left(b,b+x\right)\,\mu(dx).
\ee
Thus, Theorem~\ref{existence_case2}
applies if and only if
$\mu\ne\delta_0$ is centered,
$\inf\supp\mu>-\infty$ and,
for some $\eps>0$,
\ben\label{eq:celsf10}
\int_{[\inf\supp\mu,\;\inf\supp\mu+\eps]}q\left(-\inf\supp\mu,-\inf\supp\mu+x\right)\,\mu(dx)=\infty.
\een
Here we used that such an integral
over $\R$ is infinite if and only if
\eqref{eq:celsf10} is satisfied
(for any $y>0$, the function
$x\mapsto q(y,x)$ has linear growth as $x\to\infty$).
Notice that a sufficient condition
for~\eqref{eq:celsf10}
is $\mu(\{\inf\supp\mu\})>0$.
\end{ex}

The previous example shows that,
choosing a centered measure
$\mu\ne\delta_0$
with $\inf\supp\mu>-\infty$
in a way that \eqref{eq:celsf10} fails,
we have (A2) but violate~\eqref{cond a3lrinfty}
in the way $G_y(\bar a(y))\to0$ as $y\to\infty$.
This raises the question of whether
it is possible to violate~\eqref{cond a3lrinfty},
under~(A2),
in the way $G_y(\bar a(y))\to0$ as $y\searrow l$.
This must be more delicate
because, on the one hand,
the condition
$|\eta(x)|\geq c(x-l)^\alpha$,
for all $x\in(l,b)$,
with some $c>0$, $b>l$ and $\alpha<1$,
implies $q(l+)<\infty$,
while, on the other hand, the condition
$|\eta(x)|\leq c(x-l)$,
for all $x\in(l,b)$,
with some $c>0$ and $b>l$, implies
$\liminf_{y\searrow l}G_y(\bar a(y))>0$
by Proposition~\ref{suffcond_2}.
Still this is possible as the following example shows.

\begin{ex}
\label{ex:celsf3}
We consider again $I=(0,\infty)$ and define
\[
\eta(x)=\begin{cases}
2\sqrt 2 x\frac{\textstyle(-\log x)^{\frac 34}}{\textstyle\sqrt{-1-2\log x}}&
\text{if }x\in(0,1/2),\\
1&\text{if }x\in[1/2,\infty).
\end{cases}
\]
Then one can verify that
\[
q(1/2,x)=\begin{cases}
\sqrt{-\log x}+\frac 1{\sqrt{\log2}}\left(x-\frac 12\right)-\sqrt{\log2}&
\text{if }x\in(0,1/2),\\
x^2-x+1/4&\text{if }x\in[1/2,\infty),
\end{cases}
\]
in particular, $q(1/2,0+)=\infty$.
We see that any centered measure
$\mu\ne\delta_0$ with $\inf\supp\mu>-\infty$
and $\int_{\R_+}x^2\,\mu(dx)<\infty$
satisfies~(A2).

\smallskip
Let now $\mu$ be a centered measure with
$$
\inf\supp\mu=-1
\text{ and }
\sup\supp\mu=1,
$$
in particular, $\bar a(y)=y$ for $y>0$.
Moreover assume that
\begin{equation}\label{eq:example_intcond}
\int_{\R}-\log\left(x+1\right)\,\mu(dx)<\infty.
\end{equation}
By formula~\eqref{eq:eiit10},
we have for $x\in(-1,1]$ and $y\in(0,1/4]$
\[
q(y,y+xy)=\sqrt{-\log[y(x+1)]}-\sqrt{-\log y}
+\frac{x}{2\sqrt{-\log y}}.
\]
In particular, for any $x\in(-1,1]$,
the mapping $y\mapsto q(y,y+xy)$ is increasing with $q(y,y+xy)\to 0$ as $y\searrow0$.
Indeed, we have
\[
 \sqrt{-\log[y(x+1)]}-\sqrt{-\log y}
 =\frac{-\log(x+1)}{\sqrt{-\log[y(x+1)]}+\sqrt{-\log y}}.
\]
Then dominated convergence (cf. \eqref{eq:example_intcond}) ensures that
$G_y(\bar a(y))\to 0$ as $y\searrow0$.
\end{ex}

Finally, we illustrate how
Theorem~\ref{existence_case2}
works when $q(l+)<\infty$.

\begin{ex}
Let us now consider the CEV process with
\(\alpha\in(-\infty,1)\setminus\{0\}\),
i.e. \(I=(0,\infty)\) and \(\eta(x)=x^\alpha\)
on~\(I\).
In the case $\alpha\neq\frac12$,
for \(y>0\),
the function $q(y,\cdot)$
is given by formula~\eqref{eq:celsf11}.
In the case $\alpha=\frac12$,
for \(y>0\), we have
\[
q(y,x)=\begin{cases}
2x\log\frac xy-2(x-y)&\text{if }x\geq0,\\
\infty&\text{if }x<0.
\end{cases}
\]
In particular, $q(y,0+)<\infty$ in both cases.
Thus, Theorem~\ref{existence_case2}
applies if and only if $\mu\ne\delta_0$
is centered, $\inf\supp\mu>-\infty$,
$\mu(\{\inf\supp\mu\})>0$ and
\be
&\text{if }\alpha=\frac12,&\text{then }
\int_{\R_+}x\log x\,\mu(dx)<\infty,\\
&\text{if }\alpha<\frac12,&\text{then }
\int_{\R_+}x^{2-2\alpha}\,\mu(dx)<\infty.
\ee
\end{ex}

\subsection{Case 3: $l =-\infty$ and $r < \infty$}

This case can be reduced to Case~2 by considering the diffusion $-M$.

\subsection{Case 4: $l > - \infty$ and $r < \infty$}

In this subsection we make the following assumption.
\bi
\item[(A3)] $\inf \supp \mu>-\infty$ and $\sup \supp \mu < \infty$.
\ei
For every $y\in I$ we set $\bar a(y)=\frac{l-y}{\inf \supp \mu} \wedge \frac{r-y}{\sup \supp \mu}$. Note that for all $a\le \ol{a}(y)$ we have $a \supp \mu\subset [l-y,r-y]$.

A solution to Problem~(P)
in Case~4 is given in the next theorem.

\begin{theo}\label{existence_case4}
Suppose that (A3) is satisfied and additionally that the following implications hold true:
\ben
&\text{if } q(l+)<\infty, & \text{ then } \mu(\{\inf \supp \mu\})>0, \label{cond:emc1}\\
&\text{if } q(l+)=\infty, & \text{ then }
\liminf_{y\searrow l} G_y(\bar a(y)) > 0, \label{cond:emc2}\\
&\text{if } q(r-)<\infty, & \text{ then } \mu(\{\sup \supp \mu\})>0, \label{cond:emc3}\\
&\text{if } q(r-)=\infty, & \text{ then }
\liminf_{y\nearrow r} G_y(\bar a(y)) > 0. \label{cond:emc4}
\een
Then there exists $N_0\in \IN$ 
such that for all $N \ge N_0$ there exists a unique scale factor $a_N$ satisfying~\eqref{sf eq2} and $a_N(l)=a_N(r)=0$.
Moreover, the random walk $(Y^N_k)$, scaled with $a_N$ and starting in $m$, is embeddable in $M$ with stopping times $(\tau^N_k)_{k \in \IZ_+}$ satisfying~\eqref{mean time step}.
\end{theo}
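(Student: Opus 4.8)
The plan is to run the proof of Theorem~\ref{existence_case2} at both finite boundaries $l$ and $r$ at once. Write $b=-\inf\supp\mu>0$ and $B=\sup\supp\mu>0$, so that $\bar a(y)=\frac{y-l}{b}\wedge\frac{r-y}{B}$ is continuous and $a\le\bar a(y)$ guarantees $y+aX_i\in[l,r]$ almost surely. First I would record the shape of $G_y$ under~(A3): for $a<\bar a(y)$ the point $y+ax$ stays in a compact subset of $I$, whence $G_y(a)<\infty$, while for $a>\bar a(y)$ one of the two boundaries is overjumped on a set of positive $\mu$-mass, forcing $q=\infty$ and hence $G_y(a)=\infty$. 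As in Lemma~\ref{G bij}, $G_y$ is strictly increasing, left-continuous, and continuous on $[0,\bar a(y))$, so it maps $[0,\bar a(y)]$ onto $[0,g(y)]$, where $g(y):=G_y(\bar a(y))\in(0,\infty]$. Consequently the supremum in~\eqref{sf eq2} is attained, giving a unique $a_N(y)\in[0,\bar a(y)]$; setting $a_N(l)=a_N(r)=0$ completes the definition.

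I would then split into two regimes. Either $g(y)\ge 1/N$, in which case $G_y(a_N(y))=1/N$ exactly and, if moreover $a_N(y)<\bar a(y)$, the law $K(y,a_N(y),\cdot)$ lives in the open interval $(l,r)$; or $g(y)<1/N$, in which case $a_N(y)=\bar a(y)$ so that $G_y(a_N(y))=g(y)<1/N$ and, for $y$ different from the single kink point $y^\ast$ where both constraints coincide, exactly one constraint binds. In this second ``deficit'' regime the embedded law charges the binding boundary with probability $\mu(\{\inf\supp\mu\})$ (left) or $\mu(\{\sup\supp\mu\})$ (right). The crux is the dichotomy: a deficit at an accessible boundary will be repaired by a deterministic time shift, whereas a deficit at an inaccessible boundary will be ruled out by enlarging $N$.

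To control the inaccessible boundaries I would first establish the analogue of Lemma~\ref{lem:0706a1} for the present $\bar a$: since $\bar a$ is continuous and $q$ is lower semicontinuous on $\bar I\times\bar I$ with values in $[0,\infty]$, Fatou's lemma shows that $y\mapsto g(y)$ is lower semicontinuous and strictly positive on $I$, hence bounded away from zero on every compact subinterval. Combining this with~\eqref{cond:emc2} when $q(l+)=\infty$ and with~\eqref{cond:emc4} when $q(r-)=\infty$ gives a strictly positive infimum of $g$ over the whole part of $I$ at which an inaccessible boundary binds. Choosing $N_0$ so that $1/N_0$ lies below this infimum then guarantees that, for $N\ge N_0$, no deficit ever occurs while an inaccessible boundary is binding; when both boundaries are accessible this step is vacuous and one may take $N_0=1$. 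Hence for $N\ge N_0$ every surviving deficit sits at an accessible boundary, where~\eqref{cond:emc1} or~\eqref{cond:emc3} forces the relevant mass $w_l=\mu(\{\inf\supp\mu\})$ or $w_r=\mu(\{\sup\supp\mu\})$ to be strictly positive.

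Finally I would construct the embedding inductively as in Theorem~\ref{existence_case2}. Given $\tau(k)$, Theorem~\ref{th:eiit} provides a stopping time $\rho(k+1)$ for $X_t=M_{t+\tau(k)}$ that embeds $K(M_{\tau(k)},a_N(M_{\tau(k)}),\cdot)$ with $E[\rho(k+1)\mid\cF_{\tau(k)}]=G_{M_{\tau(k)}}(a_N(M_{\tau(k)}))=:Q\le 1/N$, which is legitimate because $Q<\infty$ throughout. I set $\tau(k+1)=\tau(k)+\rho(k+1)$, except that when $M_{\tau(k)+\rho(k+1)}$ hits the binding boundary I add the deterministic correction $\frac1{w_l}[\frac1N-Q]$ or $\frac1{w_r}[\frac1N-Q]$, at the single kink point $y^\ast$ assigning the whole correction to one designated accessible boundary so as not to double count. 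Conditioning on $\cF_{\tau(k)}$ then yields $E[\tau(k+1)\mid\cF_{\tau(k)}]=\tau(k)+Q+w\cdot\frac1{w}[\frac1N-Q]=\tau(k)+\frac1N$, which is~\eqref{mean time step} (with the correction vanishing precisely when $Q=1/N$). The main obstacle is exactly this bookkeeping of deficits: one has to exclude a deficit at an inaccessible boundary, which is the combined role of~\eqref{cond:emc2},~\eqref{cond:emc4}, the lower semicontinuity lemma, and the choice of $N_0$, and to secure a positive hitting probability at an accessible one, which is the role of~\eqref{cond:emc1} and~\eqref{cond:emc3}; the kink $y^\ast$, where both constraints bind simultaneously, is the only place demanding extra care, and there it suffices to note that for any fixed realisation at most one boundary is actually charged so the correction stays consistent.
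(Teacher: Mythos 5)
Your proposal is correct and follows essentially the same route as the paper: the paper's own proof of Theorem~\ref{existence_case4} is literally ``Similar to the proof of Theorem~\ref{existence_case2}'', and your argument is a faithful two-boundary adaptation of that proof (bijectivity of $G_y$ on $[0,\bar a(y)]$, the deficit versus no-deficit dichotomy, the Lemma~\ref{lem:0706a1}-type lower semicontinuity argument combined with \eqref{cond:emc2}/\eqref{cond:emc4} to choose $N_0$, and deterministic time corrections weighted by $\mu(\{\inf\supp\mu\})$ or $\mu(\{\sup\supp\mu\})$ at accessible boundaries). The one detail you add beyond the paper---assigning the correction to a single designated accessible boundary at the kink point $y^\ast$ where both constraints bind---is handled correctly.
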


\begin{proof}
Similar to the proof of Theorem \ref{existence_case2}.
\end{proof}

\begin{remark}
Notice that the assumptions
of Theorem~\ref{existence_case4} are satisfied
whenever $\mu$ has compact support,
$\mu(\{\inf\supp\mu\})>0$ and
$\mu(\{\sup\supp\mu\})>0$
(see Proposition~\ref{suffcond_3}).
\end{remark}

The next two propositions provide sufficient conditions for the properties \eqref{cond:emc2} and~\eqref{cond:emc4} to hold true.

\begin{propo}\label{suffcond_3}
Suppose (A3).
If $\mu(\{\inf \supp \mu\})>0$, then \eqref{cond:emc2} is satisfied.
\end{propo}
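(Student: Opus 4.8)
The plan is to follow the proof of Proposition~\ref{suffcond_1} almost verbatim, up to one extra check forced by the two-sided nature of Case~4. The underlying mechanism is that $\bar a(y)$ is designed so that, \emph{when the left constraint $\frac{l-y}{\inf\supp\mu}$ is the binding term in the minimum}, the smallest point of $\supp\mu$ is mapped exactly onto the left boundary~$l$. Combined with a positive atom there and with $q(l+)=\infty$, this forces $G_y(\bar a(y))=\infty$ and hence a positive $\liminf$.

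First I would record the elementary fact that, since $\mu$ is centered with $\mu\neq\delta_0$, its support meets $(-\infty,0)$, so under (A3) we have $b:=\inf\supp\mu\in(-\infty,0)$; in particular both terms in $\bar a(y)=\frac{l-y}{b}\wedge\frac{r-y}{\sup\supp\mu}$ are strictly positive for $y\in I$. Next I would determine which term binds near~$l$: as $y\searrow l$ the left term $\frac{l-y}{b}\to0$ while the right term tends to $\frac{r-l}{\sup\supp\mu}>0$, so there is $\delta>0$ with $\bar a(y)=\frac{l-y}{b}$ for all $y\in(l,l+\delta)$. For such~$y$, evaluation at the atom $x=b$ gives $y+\bar a(y)\,b=l$.

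The decisive step is then a one-line lower bound by this single atom: for $y\in(l,l+\delta)$, using $q(y,\cdot)\ge0$ and $\mu(\{b\})>0$,
\[
G_y(\bar a(y))=\int_\R q\bigl(y,\,y+\bar a(y)x\bigr)\,\mu(dx)\ \ge\ q(y,l)\,\mu(\{b\}).
\]
Since the condition $q(y,l+)=\infty$ does not depend on~$y$, the hypothesis $q(l+)=\infty$ yields $q(y,l)=q(y,l+)=\infty$ for every $y\in I$; hence $G_y(\bar a(y))=\infty$ on $(l,l+\delta)$ and therefore $\liminf_{y\searrow l}G_y(\bar a(y))=\infty>0$, which is exactly~\eqref{cond:emc2}.

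The only genuine (and mild) obstacle is verifying that the left term is the binding one in $\bar a(y)$ for $y$ close to~$l$; everything else is the single-atom estimate above. I would also flag the one structural difference from Case~2: here one should \emph{not} expect $G_y(\bar a(y))=\infty$ for all $y\in I$, since at those $y$ where the right constraint binds the point $y+\bar a(y)b$ stays strictly above~$l$ and the left-endpoint argument fails. This is harmless, because \eqref{cond:emc2} asks only for the behaviour of the $\liminf$ as $y\searrow l$, and restricting to the neighbourhood $(l,l+\delta)$ suffices.
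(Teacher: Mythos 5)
Your proof is correct and is essentially the paper's own argument: the paper proves this by saying ``similar to Proposition~\ref{suffcond_1}'', i.e.\ the atom at $\inf\supp\mu$ is mapped onto $l$ by the scale $\bar a(y)$, so $G_y(\bar a(y))\ge q(y,l)\,\mu(\{\inf\supp\mu\})=\infty$ when $q(l+)=\infty$. Your only addition --- checking that the left term $\frac{l-y}{\inf\supp\mu}$ is the binding one in $\bar a(y)$ for $y$ near $l$ (and noting that one gets $\infty$ only locally, not on all of $I$ as in Case~2) --- is exactly the detail the paper leaves implicit, and you handle it correctly.
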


\begin{proof}
Similar to the proof of Proposition \ref{suffcond_1}.
\end{proof}

Similarly, the condition $\mu(\{\sup \supp \mu\})>0$ is sufficient for \eqref{cond:emc4}.

\begin{propo}\label{suffcond_4}
Suppose (A3).
If $\limsup_{x\searrow l}\frac {|\eta(x)|}{x-l}<\infty$, then \eqref{cond:emc2} is satisfied. If $\limsup_{x\nearrow r}\frac {|\eta(x)|}{r-x}<\infty$, then \eqref{cond:emc4} is satisfied.
\end{propo}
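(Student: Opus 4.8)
The two assertions are symmetric, so the plan is to prove the first one in detail (the growth condition on $\eta$ near $l$ yielding \eqref{cond:emc2}) and to obtain the second by running the identical argument at the right endpoint $r$, or equivalently by passing to the diffusion $-M$ as in Case~3. The whole argument is a transcription of the proof of Proposition~\ref{suffcond_2}, now localized at a single endpoint. In particular I would prove the conclusion $\liminf_{y\searrow l}G_y(\bar a(y))>0$ directly, without invoking $q(l+)=\infty$; this makes the implication \eqref{cond:emc2} hold irrespective of the value of $q(l+)$.

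First I would normalize $\inf\supp\mu=-1$, so that the left-hand term in $\bar a(y)=\frac{l-y}{\inf\supp\mu}\wedge\frac{r-y}{\sup\supp\mu}$ equals $y-l$. Since the right-hand term $\frac{r-y}{\sup\supp\mu}$ converges to the strictly positive constant $\frac{r-l}{\sup\supp\mu}$ as $y\searrow l$ (note $\sup\supp\mu>0$ because $\mu\neq\delta_0$ is centered), there is a neighborhood of $l$ on which the minimum is attained by the first term, i.e.\ $\bar a(y)=y-l$ there. Hence, for $y$ close to $l$,
\[
G_y(\bar a(y))=\int_\R q\bigl(y,y+(y-l)x\bigr)\,\mu(dx)=\int_\R h(x,y)\,\mu(dx),
\qquad h(x,y):=q\bigl(y,y+(y-l)x\bigr),
\]
exactly as in Proposition~\ref{suffcond_2}, and it remains to show $\liminf_{y\searrow l}\int_\R h(x,y)\,\mu(dx)>0$.

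For the core estimate I would use the representation \eqref{eq:int_rep_h}, $h(x,y)=2\int_0^x\int_0^u\frac{(y-l)^2}{\eta^2((y-l)z+y)}\,dz\,du$, noting that the region of integration is positively oriented for $x\le0$ as well, so the integrand is nonnegative throughout (here (A3) keeps $z$ in a compact range and the evaluation point inside $\bar I$). The hypothesis $\limsup_{x\searrow l}\frac{|\eta(x)|}{x-l}<\infty$ is equivalent to $\liminf_{x\searrow l}\frac{x-l}{|\eta(x)|}>0$, and since $x:=(y-l)z+y$ satisfies $x-l=(y-l)(z+1)$ and $x\to l$ as $y\searrow l$ for each fixed $z$, the identity $\frac{y-l}{|\eta(x)|}=\frac{1}{z+1}\cdot\frac{x-l}{|\eta(x)|}$ gives $\liminf_{y\searrow l}\frac{y-l}{|\eta((y-l)z+y)|}=\frac{1}{z+1}\liminf_{x\searrow l}\frac{x-l}{|\eta(x)|}>0$ for every $z>-1$. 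Applying Fatou's lemma to the nonnegative double integral then yields $\liminf_{y\searrow l}h(x,y)>0$ for every $x\in(-1,\infty)\setminus\{0\}$, and a second application of Fatou as in \eqref{eq:fatou}, using that $\mu(\R\setminus\{0\})>0$ because $\mu\neq\delta_0$, gives $\liminf_{y\searrow l}\int_\R h(x,y)\,\mu(dx)\ge\int_\R\liminf_{y\searrow l}h(x,y)\,\mu(dx)>0$, which is \eqref{cond:emc2}.

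I expect no deep difficulty; the only points requiring care are bookkeeping ones, namely (i) verifying that the minimum defining $\bar a(y)$ is governed by the endpoint term near $l$, so that the purely local growth condition on $\eta$ suffices, and (ii) justifying the two applications of Fatou, in particular the positive orientation of the double integral when $x<0$. The right-endpoint claim follows verbatim after normalizing $\sup\supp\mu=1$, replacing $y-l$ by $r-y$ and $\frac{x-l}{|\eta(x)|}$ by $\frac{r-x}{|\eta(x)|}$, and using the symmetric identity $r-(y+(r-y)z)=(r-y)(1-z)$.
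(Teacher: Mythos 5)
Your proposal is correct and takes essentially the same route as the paper, whose entire proof of this proposition is the remark that it is ``similar to the proof of Proposition~\ref{suffcond_2}'': you carry out exactly that adaptation, with the two Fatou arguments localized at one endpoint. The only genuinely new bookkeeping point in Case~4 --- that $\bar a(y)=\frac{l-y}{\inf\supp\mu}\wedge\frac{r-y}{\sup\supp\mu}$ is governed by the first term for $y$ near $l$ (since the second tends to a positive constant) --- is one you identify and handle correctly.
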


\begin{proof}
Similar to the proof of Proposition \ref{suffcond_2}.
\end{proof}

Finally, it is worth noting that
the detailed discussions in Case~2
about the role of different assumptions, etc.,
have their analogues in Case~4.
In particular, we have:
\bi
\item
The statements of Lemma~\ref{lem:0706a1} apply verbatim under (A3) instead of~(A2);
\item
Statement~(Eq) in Remark~\ref{rem:0706a1} applies verbatim under (A3) instead of~(A2);
\item
The conclusion of Theorem~\ref{existence_case4} holds true with $N_0=1$ whenever (A3) is satisfied and we have
$$
\mu(\{\inf\supp\mu\})>0
\quad\text{and}\quad
\mu(\{\sup\supp\mu\})>0
$$
(cf.~with the statements in the end of
Theorem~\ref{existence_case2} and
Proposition~\ref{suffcond_1});
\item
Under the assumptions of Theorem~\ref{existence_case4},
for all $N\ge N_0$ and $y\in I$,
the scale factors $a_N$ satisfy~\eqref{sf eq}
(not only~\eqref{sf eq2})
whenever $q(l+)=q(r-)=\infty$
(cf.~with the statement in the end of Theorem~\ref{existence_case2}).
\ei

\section{Weak convergence}\label{sec weak conv}
In this section we use the setting
and notations of Section~\ref{sec3}.
In particular, we consider a weak solution
$(M,W)$ of~\eqref{sdeM},
denote by $(Y^N_k)$ the scaled random walk \eqref{eq:celsf1}, and assume that $\mu\ne\delta_0$ is a centered probability measure on~$\bbR$.
Throughout this section we suppose that
one of the sufficient conditions from Section~\ref{sec3}
is satisfied that guarantees,
for sufficiently large $N\in\N$,
the existence of a scale factor $a_N$ satisfying~\eqref{sf eq2}
that solves Problem~(P).
Let us remark that under each of these
sufficient conditions, we have
\ben\label{int condi ref distri}
\int_\R q(m+ax)\,\mu(dx)<\infty
\quad\text{for some }a>0
\een
(recall that $q(x)$
is a short-hand notation for $q(m,x)$).
We extend $Y^N$ to a continuous-time process
on $\R_+$
via linear interpolation, i.e.\ for all $t\geq0$ we set $Y^N_t = Y^N_{\lfloor t \rfloor} + (t-\lfloor t \rfloor) (Y^N_{\lfloor t \rfloor + 1} - Y^N_{\lfloor t \rfloor})$.

In this section we show that if the diffusion
coefficient~$\eta$ is locally bounded
away from~$0$ and from~$\pm\infty$
and $\mu$ has compact support, then the sequence of continuous processes
$(Y^N_{Nt})_{t\geq0}$
converges in law
to the process $(M_t)_{t\geq0}$, as $N\to\infty$
(see Theorem~\ref{convi sums2}).
We also present other sets of sufficient conditions
for this weak convergence
(generally, the less we require on~$\eta$,
the more we need to require on~$\mu$).
One can thus interpret $(Y^N_k)_{k\in\IZ_+}$
as a Markov chain approximating the diffusion
$(M_t)_{t\geq0}$.

To simplify the analysis, we only show the weak convergence on the time interval $[0,1]$. A straightforward generalization implies the weak convergence on~$\R_+$.

We first assume that $\eta$ satisfies the following condition.
\bi
\item[(C1)] $|\eta|$ and $\frac{1}{|\eta|}$ are bounded on $I$.
\ei

\begin{theo}\label{convi sums}
Suppose that (C1) holds true.
Then the processes $(Y^N_{Nt})_{t\in[0,1]}$
converge to $(M_t)_{t \in [0,1]}$ in distribution,
as $N \to \infty$, i.e.\ the associated measures on
$\big(C[0,1], \cB(C[0,1])\big)$
converge weakly.
\end{theo}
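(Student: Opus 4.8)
The plan is to work directly on the probability space carrying $(M,W)$ and the embedding stopping times $(\tau^N_k)$ produced by the solution of Problem~(P), and to prove the stronger assertion that a process with the same law as $(Y^N_{Nt})_{t\in[0,1]}$ converges to $M$ in probability in the supremum norm; weak convergence then follows because it only depends on the law. Since the discrete skeletons satisfy $(M_{\tau^N_k})_{k\geq0}\stackrel{d}{=}(Y^N_k)_{k\geq0}$ and linear interpolation is a measurable map, the time-changed, interpolated process
\[
\tilde Z^N_t:=M_{\tau^N_{\lfloor Nt\rfloor}}+(Nt-\lfloor Nt\rfloor)\bigl(M_{\tau^N_{\lfloor Nt\rfloor+1}}-M_{\tau^N_{\lfloor Nt\rfloor}}\bigr),\qquad t\in[0,1],
\]
has the same law on $C[0,1]$ as $(Y^N_{Nt})_{t\in[0,1]}$, so it suffices to show $\sup_{t\in[0,1]}|\tilde Z^N_t-M_t|\to0$ in probability. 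Writing $\Delta^N_k:=\tau^N_k-\tau^N_{k-1}$, property~\eqref{mean time step} gives $E[\Delta^N_k\mid\cF_{\tau^N_{k-1}}]=1/N$, so $(\Delta^N_k-1/N)_k$ is a martingale-difference sequence for $(\cF_{\tau^N_{k-1}})_k$; in particular the $\Delta^N_k$ are pairwise uncorrelated.

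The core step is the convergence of the time change: I would show that for each fixed $t\in[0,1]$,
\[
\tau^N_{\lfloor Nt\rfloor}=\sum_{k=1}^{\lfloor Nt\rfloor}\Delta^N_k\longrightarrow t\quad\text{in probability}.
\]
Since $E[\tau^N_{\lfloor Nt\rfloor}]=\lfloor Nt\rfloor/N\to t$, this is a weak law of large numbers for the uncorrelated array $\{N\Delta^N_k\}$, whose members have conditional mean $1$. As second moments of the embedding times need not be available, I would invoke the uniform integrability of this array (Lemma~\ref{lemma ui}) together with the weak law of large numbers for uniformly integrable uncorrelated arrays (Theorem~\ref{lem:LLN}). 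To upgrade pointwise-in-$t$ convergence to uniform convergence $\sup_{t\in[0,1]}|\tau^N_{\lfloor Nt\rfloor}-t|\to0$ in probability, I would use that $t\mapsto\tau^N_{\lfloor Nt\rfloor}$ is nondecreasing while the limit $t\mapsto t$ is continuous: fixing a grid $0=s_0<\dots<s_m=1$ of mesh below $\eps/2$ and sandwiching by monotonicity reduces the supremum to $\max_i|\tau^N_{\lfloor Ns_i\rfloor}-s_i|$ up to the mesh error, and the latter is controlled at finitely many points by the pointwise statement.

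Granting the uniform time-change convergence, the remaining steps are soft. Taking $t=1$ gives $\tau^N_N\to1$ in probability, so with probability close to $1$ all times $\tau^N_k$, $0\le k\le N$, lie in the compact interval $[0,2]$, on which the (a.s.\ continuous) path of $M$ is uniformly continuous. Given $\eps>0$, pick $\delta>0$ from uniform continuity; on the event $\{\sup_t|\tau^N_{\lfloor Nt\rfloor}-t|<\delta\}\cap\{\tau^N_N\le2\}$ the two embedding times $\tau^N_{\lfloor Nt\rfloor}$ and $\tau^N_{\lfloor Nt\rfloor+1}$ bracketing each $t$ are both within $\delta+1/N$ of $t$, so $\tilde Z^N_t$, being a convex combination of $M$ at these two times, lies within $\eps$ of $M_t$ simultaneously for all $t$. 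As this event has probability tending to $1$, we obtain $\sup_{t\in[0,1]}|\tilde Z^N_t-M_t|\to0$ in probability and hence the claimed weak convergence.

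The main obstacle is the time-change convergence, and within it the uniform integrability input (Lemma~\ref{lemma ui}): one must control the tails of the rescaled embedding times $N\Delta^N_k$ uniformly in $k\le N$ and in $N$. This is exactly where~(C1) enters, since boundedness of $|\eta|$ and $1/|\eta|$ yields a two-sided comparison $c\,(x-y)^2\le q(y,x)\le C\,(x-y)^2$ for $x,y\in I$, which pins down the order of the scale factors $a_N$ determined by~\eqref{sf eq2} and thereby the tail behaviour that makes $\{N\Delta^N_k\}$ uniformly integrable. Everything else — the martingale-difference/uncorrelatedness structure, the monotonicity upgrade to uniform convergence, and the passage through uniform continuity of the diffusion paths — is routine once this estimate and the weak law of Theorem~\ref{lem:LLN} are in place.
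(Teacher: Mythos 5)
Your proposal is correct and takes essentially the same route as the paper's proof: realize $Y^N$ through the embedding so that only convergence in probability in $\|\cdot\|_{C[0,1]}$ must be shown, obtain $\tau^N_{\lfloor Nt\rfloor}\to t$ from Lemma~\ref{lemma ui} and Theorem~\ref{lem:LLN} (this is exactly Lemma~\ref{convi st times}), upgrade to a uniform-in-$t$ statement via monotonicity of the stopping times on a finite grid, and conclude using continuity of the paths of $M$. The only cosmetic difference is that you invoke uniform continuity of $M$ on $[0,2]$ together with the event $\{\tau^N_N\le2\}$, whereas the paper works with a modulus-of-continuity event for $M$ on $[0,1]$.
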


We first show that
boundedness of $|\eta|$
implies that the scale factor
$a_N(y)$ is of order~$\frac1{\sqrt{N}}$.

\begin{lemma}\label{convscalefactor}
If $|\eta(x)|\le U<\infty$ for all $x\in I$,
then there exists $A\in \R_+$ such that $a_N(y) \le \frac{A}{\sqrt{N}}$
for all $N\in\N$ and $y \in I$.
\end{lemma}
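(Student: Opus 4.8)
The plan is to establish a quadratic lower bound on $G_y$ that is \emph{uniform in} $y\in I$, and then to read off the bound on $a_N(y)$ from the relation defining the scale factor. The starting observation is that the hypothesis $|\eta|\le U$ on $I$ gives $\frac1{\eta^2}\ge\frac1{U^2}$ on $I$, so that bounding the integrand in the definition of $q$ from below yields
\[
q(y,w)\ge\frac{(w-y)^2}{U^2}\qquad\text{for all }y\in I,\ w\in\R,
\]
where for $w\notin[l,r]$ the left-hand side is $+\infty$ and the inequality is trivial. One checks this directly from $q(y,w)=\int_y^w\int_y^u\frac2{\eta^2(z)}\,dz\,du$, splitting into the cases $w\ge y$ and $w<y$; in both, doubly integrating the constant $\frac2{U^2}$ produces exactly $\frac{(w-y)^2}{U^2}$.

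Substituting $w=y+ax$ and integrating against $\mu$ then gives, for every $y\in I$ and $a\ge0$,
\[
G_y(a)=\int_\R q(y,y+ax)\,\mu(dx)\ge\frac{a^2}{U^2}\int_\R x^2\,\mu(dx).
\]
Since the constant $\frac1{U^2}$ does not depend on $y$, this bound is already uniform in $y$; what remains is to see that the multiplicative constant can be taken strictly positive without assuming a finite second moment. To this end I would truncate: because $\mu\ne\delta_0$ there is an $R>0$ with $v:=\int_{\{|x|\le R\}}x^2\,\mu(dx)>0$, and discarding the (nonnegative) part of the integral over $\{|x|>R\}$ gives $G_y(a)\ge c\,a^2$ with $c:=v/U^2>0$, again uniformly in $y\in I$.

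Finally I would invoke the definition of $a_N$. By \eqref{sf eq2} together with the left-continuity of $G_y$ (so that the supremum is attained), we have $G_y(a_N(y))\le\frac1N$ for every $y\in I$; the same inequality holds a fortiori when $a_N$ is instead determined by the equality~\eqref{sf eq}. Combining this with the lower bound gives $c\,a_N(y)^2\le G_y(a_N(y))\le\frac1N$, whence $a_N(y)\le\frac1{\sqrt{cN}}$. Setting $A:=1/\sqrt{c}=U/\sqrt{v}$ then proves the claim.

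The only point requiring any care is the uniformity of the constant over $y\in I$, and I expect this to be essentially automatic here: the bound $\frac1{\eta^2}\ge\frac1{U^2}$ is global on $I$, so the constant $U$, and hence $c$, never depends on the base point $y$. The mildly delicate step is the truncation, which is what lets us avoid assuming $\int_\R x^2\,\mu(dx)<\infty$; it is harmless because $q(y,\cdot)\ge0$ permits discarding the tail of the integral, while $\mu\ne\delta_0$ guarantees the retained part $v$ is strictly positive.
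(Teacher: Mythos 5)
Your proof is correct and follows the same core argument as the paper: the uniform lower bound $q(y,x)\ge (x-y)^2/U^2$ (the paper's \eqref{inequi 2202}), combined with $G_y(a_N(y))\le \frac1N$ coming from \eqref{sf eq2}, yields the claim. The only difference is how the strictly positive multiplicative constant is secured: the paper invokes the standing assumption \eqref{int condi ref distri}, which together with $q(x)\ge (x-m)^2/U^2$ forces $\int x^2\,\mu(dx)<\infty$, and takes $A=U/\sqrt{\int x^2\,\mu(dx)}$; you instead truncate to $\{|x|\le R\}$ and use $\mu\ne\delta_0$, which makes your proof independent of \eqref{int condi ref distri} at the negligible cost of a slightly larger constant $A=U/\sqrt{v}$.
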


\begin{proof}
For $x, y \in I$ we have
\ben\label{inequi 2202}
q(y,x) \ge \int_y^x \int_y^{u} \frac{2}{U^2}\,dz\,du = \frac{(x-y)^2}{U^2}.
\een
In particular, $q(x) \ge (x-m)^2/U^2$, and
hence~\eqref{int condi ref distri}
implies that $\int x^2\,\mu(dx) < \infty$.
It follows from \eqref{sf eq2} and \eqref{inequi 2202} that
\be
\frac 1N\ge G_y(a_N(y))=\int q(y,y+a_N(y)x)\,\mu(dx)\ge \frac{a_N^2(y)\int x^2\,\mu(dx)}{U^2},
\ee
which yields the claim with
$A = \frac{U}{\sqrt{\int x^2\,\mu(dx)}}$.
\end{proof}

\begin{lemma}\label{lemma ui}
Assume (C1).
The solution to Problem~(P)
can be chosen in such a way that
the $(\cF_{\tau^N(k-1)+t})$-stopping times
$\rho^N(k)=\tau^N(k)-\tau^N(k-1)$
have the following uniform integrability property:
the family $N \rho^N(k)$, $1\le k\le N$, $N\in\IN$,
is uniformly integrable.
\end{lemma}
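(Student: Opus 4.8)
The plan is to use the freedom in the choice of the embedding and to build $\rho^N(k)$ by time-changing a single fixed Skorokhod embedding of $\mu$ into Brownian motion. Write $|\eta|\le U$ and $1/|\eta|\le L$ on $I$ as granted by (C1). Feeding these bounds into the double integral defining $q$ gives, for all $y\in I$ and $x\in\bar I$,
\[
\frac{(x-y)^2}{U^2}\le q(y,x)\le L^2(x-y)^2 .
\]
In particular $q(x)\ge (x-m)^2/U^2$, so \eqref{int condi ref distri} forces $\int x^2\,\mu(dx)<\infty$, and Lemma~\ref{convscalefactor} yields $a_N(y)\le A/\sqrt N$. These are the only quantitative inputs needed; note that a second moment is all that is available, so one cannot hope to bound any moment of order $>1$ of $N\rho^N(k)$, and uniform integrability must instead be obtained from a stochastic domination by a fixed integrable variable.

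First I would fix, once and for all, a uniformly integrable embedding $\tilde\rho$ of the centered law $\mu$ into a standard Brownian motion started at $0$ (e.g.\ the Az\'ema--Yor embedding); since $\int x^2\,\mu(dx)<\infty$ this $\tilde\rho$ is integrable with $E[\tilde\rho]=\int x^2\,\mu(dx)$. At step $k$, working conditionally on $\cF_{\tau(k-1)}$ and writing $y:=M_{\tau(k-1)}$, let $B$ be the Dambis--Dubins--Schwarz Brownian motion of the shifted diffusion, so that $M_{\tau(k-1)+t}=B_{\langle M\rangle_t}$, and let $\check B_s:=a_N(y)^{-1}(B_{a_N(y)^2 s}-y)$ be the associated standard Brownian motion started at $0$. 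Applying the fixed embedding to $\check B$ produces a time $\tilde\rho^{(k)}$ with $\check B_{\tilde\rho^{(k)}}\sim\mu$, and I set $\hat\rho^N(k):=a_N(y)^2\,\tilde\rho^{(k)}$; by Brownian scaling $B_{\hat\rho^N(k)}=y+a_N(y)\check B_{\tilde\rho^{(k)}}$ has law $K(y,a_N(y),\cdot)$. Time-changing back, $\rho^N(k):=\inf\{t:\langle M\rangle_t\ge\hat\rho^N(k)\}$ embeds $K(y,a_N(y),\cdot)$ into $M$, and since $dt=\eta^{-2}(B_s)\,ds$ along the time change with $\eta^{-2}\le L^2$,
\[
\rho^N(k)=\int_0^{\hat\rho^N(k)}\frac{ds}{\eta^2(B_s)}\le L^2\hat\rho^N(k)=L^2 a_N(y)^2\,\tilde\rho^{(k)}.
\]

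Using $a_N(y)\le A/\sqrt N$ this is the decisive pointwise bound $N\rho^N(k)\le L^2A^2\,\tilde\rho^{(k)}$, where each $\tilde\rho^{(k)}$ has the fixed law of the integrable $\tilde\rho$. Consequently every $N\rho^N(k)$ is stochastically dominated by the single integrable variable $Z:=L^2A^2\tilde\rho$, whence, for every $K$,
\[
E\!\left[N\rho^N(k)\,\mathbf 1_{\{N\rho^N(k)>K\}}\right]\le K\,P(Z>K)+\int_K^\infty P(Z>t)\,dt=E\!\left[Z\,\mathbf 1_{\{Z>K\}}\right],
\]
uniformly in $k$ and $N$; letting $K\to\infty$ gives the asserted uniform integrability. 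It remains to verify that this choice solves Problem~(P). As $\langle M\rangle_{\rho^N(k)}$ is conditionally integrable (being $\le\hat\rho^N(k)$), the stopped process $M_{(\tau(k-1)+\cdot)\wedge\rho^N(k)}$ is an $L^2$-bounded, hence uniformly integrable, martingale; since $0\le q(y,M_{(\tau(k-1)+\cdot)\wedge\rho^N(k)})\le L^2(M_{(\tau(k-1)+\cdot)\wedge\rho^N(k)}-y)^2$, optional sampling applies to the local martingale $(q(y,M_{\tau(k-1)+t})-t)_{t\ge0}$ at $\rho^N(k)$ and yields $E[\rho^N(k)\mid\cF_{\tau(k-1)}]=\int q(y,x)\,K(y,a_N(y),dx)=G_y(a_N(y))$. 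By \eqref{sf eq2} this equals $1/N$ when the relevant boundaries are inaccessible (i.e.\ $q(l+)=\infty$, resp.\ $q(l+)=q(r-)=\infty$) and is $\le1/N$ otherwise; in the latter cases one restores \eqref{mean time step} exactly by the boundary correction of the proof of Theorem~\ref{existence_case2}, adding on absorption at the boundary the deterministic time $\frac1w[\tfrac1N-G_y(a_N(y))]$, whose $N$-fold is bounded by $1/w$ and hence harmless for uniform integrability.

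I expect the main obstacle to be the interplay of the time change with accessible boundaries: when $q(l+)<\infty$ (or $q(r-)<\infty$) the diffusion may be absorbed before its Brownian clock reaches $\hat\rho^N(k)$, so the Dambis--Dubins--Schwarz Brownian motion has finite lifetime and the global time change must be argued with care. The content to be made rigorous is that absorption can only shorten $\rho^N(k)$ and that the boundary atom of $K(y,a_N(y),\cdot)$---present exactly because of \eqref{cond a2l}, \eqref{cond:emc1} or \eqref{cond:emc3}---is embedded at the absorption instant, so that the bound $N\rho^N(k)\le L^2A^2\tilde\rho^{(k)}$, and with it the uniform integrability, is preserved.
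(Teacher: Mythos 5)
Your proposal is correct, but it reaches the uniform integrability by a genuinely different route than the paper. The paper works with the specific embedding constructed in its Appendix: by Lemma~\ref{lemma st properties}, $\rho^N(k)$ has the explicit law of $\int_0^1 a_N^2(\tilde M_0)\, b_x^2(s,\tilde W_s)\,\eta^{-2}\big(\tilde M_0+a_N(\tilde M_0)b(s,\tilde W_s)\big)\,ds$; the upper bound $U$ on $|\eta|$ together with $E[\rho^N(1)]=1/N<\infty$ shows that $X=\int_0^1 b_x^2(s,W_s)\,ds$ is integrable, and then the lower bound on $|\eta|$ together with $a_N\le A/\sqrt N$ gives the stochastic domination $N\rho^N(k)\stackrel{d}{\le}\frac{A^2}{L^2}X$. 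You instead import a classical minimal embedding (Az\'ema--Yor) of $\mu$ into Brownian motion, whose integrability $E[\tilde\rho]=\int x^2\,\mu(dx)<\infty$ is standard, and transfer it to $M$ via Dambis--Dubins--Schwarz plus Brownian scaling, obtaining the pointwise bound $N\rho^N(k)\le L^2A^2\tilde\rho^{(k)}$; the quantitative inputs (finite second moment of $\mu$ from $q(x)\ge(x-m)^2/U^2$, and Lemma~\ref{convscalefactor}) and the final domination-implies-UI step are identical. The trade-off is in what must be re-verified: the paper gets $E[\rho^N(k)\,|\,\cF_{\tau^N(k-1)}]=G_y(a_N(y))$ for free from Theorem~\ref{th:eiit}, since the Appendix embedding is by construction the one with minimal conditional expectation, whereas you must re-derive it by optional sampling (your $L^2$-boundedness argument under (C1) is sound) and reinstall the boundary correction of Theorem~\ref{existence_case2}, correctly observing that $N$ times that deterministic correction is at most $1/w$ and so harmless. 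The obstacle you flag at accessible boundaries is real but resolves exactly as you indicate: the Az\'ema--Yor time never exceeds the exit time of $[\inf\supp\mu,\sup\supp\mu]$ (the stopping rule triggers at the latest when the path reaches $\inf\supp\mu$, where the barycenter function equals $0$, or when the maximum reaches $\sup\supp\mu$), and since $a_N(y)\le\bar a(y)$ the scaled target support lies in $[l,r]$; hence $\hat\rho^N(k)$ does not exceed the lifetime $\langle M\rangle_\infty$ of the DDS clock, the time change back is well defined, and the domination is preserved. One practical advantage of the paper's choice is that the same construction is reused in localized form in the proof of Theorem~\ref{convi sums2}; your embedding could be localized in the same way, so nothing is lost, but that adaptation would have to be redone.
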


\begin{proof}
Choose $\rho^N(k)$ according to the construction method outlined in the Appendix. More precisely, suppose that $\rho^N(k)= \Delta(M_{\tau^N(k-1)})$ (see last line of the Appendix for the definition). We now show that the family $N \rho^N(k)$, $1\le k\le N$, $N\in\IN$, is uniformly integrable.

Below, for random variables
$\xi$ and $\zeta$,
we write $\xi\stackrel{d}{=}\zeta$
(resp. $\xi\stackrel{d}{\le}\zeta$)
to indicate that $\xi$ and $\zeta$
have the same distribution
(resp. $\zeta$ stochastically dominates~$\xi$).
Let $U<\infty$ be an upper bound for $|\eta|$. Then
Lemma~\ref{lemma st properties} yields
\be
\rho^N(1)=
\Delta(M_0) \stackrel{d}{=} \int_0^1 \frac{a_N^2(m) b^2_x(s,\tilde W_s)}{\eta^2(a_N(m) b(s,\tilde W_s) + m)} ds \ge \frac{a_N^2(m)}{U^2} \int_0^1 b^2_x(s,\tilde W_s) ds.
\ee
Therefore, the random variable
$X = \int_0^1 b^2_x(s,W_s) ds$ is integrable.
Now let $L>0$ be a lower bound for $|\eta|$. Then,
with $\tilde M_0$ having the distribution
of $M_{\tau^N(k)}$, we get
\be
\rho^N(k+1)=
\Delta(M_{\tau^N(k)}) \stackrel{d}{=} \int_0^1 \frac{a_N^2(\tilde M_0) b^2_x(s,\tilde W_s)}{\eta^2(a_N(\tilde M_0) b(s,\tilde W_s) + \tilde M_0)} ds  \stackrel{d}{\le}
\frac{A^2}{L^2 N}X,
\ee
where we also use that,
by Lemma~\ref{convscalefactor},
we have $a_N(y) \le \frac{A}{\sqrt{N}}$
for all $y\in I$.
Thus,
\be
P(N\rho^N(k+1) \ge x) \le P\left(\frac{A^2}{L^2}X \ge x\right), \quad x \ge 0.
\ee
In other words, the integrable random variable
$\frac{A^2}{L^2}X$ stochastically
dominates every $N\rho^N(k)$,
and hence we get the result.
\end{proof}

We next aim at showing that for $s \in [0,1]$ the stopping times $\tau^N(\lfloor Ns \rfloor)$ converge to $s$ in probability. To this end we use the following version of the weak law of large numbers.

\begin{theo}[Weak LLN for uncorrelated arrays]
\label{lem:LLN}
Let $(Z^n_k)_{n\in\bbN,1\leq k\leq n}$
be a triangular array of nonnegative
and uniformly integrable random variables.
Suppose that, for all $n\in\bbN$,
the collection $Z^n_k$, $1\leq k\leq n$,
is pairwise uncorrelated. Then
$\frac1n\sum_{k=1}^n (Z^n_k-E Z^n_k)$
converges to zero in probability.
\end{theo}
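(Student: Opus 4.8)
The plan is to run a truncation argument at a fixed (constant) level, splitting the normalized sum into a bounded, well-concentrated part and a small tail part, and to send the truncation error to zero only at the very end. First I would extract from uniform integrability the two facts I will use: $C:=\sup_{n,k}E[Z^n_k]<\infty$, and, for each $\eps>0$, a level $M=M_\eps$ with $\sup_{n,k}E[Z^n_k\mathbf 1_{\{Z^n_k>M\}}]\le\eps$. I then decompose $Z^n_k=\bar Z^n_k+R^n_k$ with $\bar Z^n_k:=Z^n_k\wedge M$ and $R^n_k:=(Z^n_k-M)^+\ge0$, noting $E[R^n_k]\le\eps$, and correspondingly write $\frac1n\sum_{k=1}^n(Z^n_k-EZ^n_k)=A_n+B_n$, where $A_n$ collects the centered truncated terms and $B_n$ the centered tails.

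The tail part $B_n$ is disposed of in $L^1$: since $R^n_k\ge0$ and $E[R^n_k]\le\eps$, one has $E|B_n|\le\frac1n\sum_k E|R^n_k-ER^n_k|\le\frac2n\sum_k E[R^n_k]\le2\eps$, uniformly in $n$, hence $B_n$ is small in probability by Markov's inequality. For the truncated part $A_n$, which has mean zero and is bounded, I would use Chebyshev's inequality, so the task reduces to an \emph{upper} bound on $\Var\big(\sum_k\bar Z^n_k\big)=\sum_k\Var(\bar Z^n_k)+\sum_{j\ne k}\Cov(\bar Z^n_j,\bar Z^n_k)$. The diagonal is harmless: $\Var(\bar Z^n_k)\le M^2$ gives $\sum_k\Var(\bar Z^n_k)\le nM^2$, contributing $M^2/n\to0$ after dividing by $n^2$.

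The main obstacle, and the heart of the argument, is the off-diagonal sum, because truncation does \emph{not} preserve uncorrelatedness, so I cannot simply drop these terms. The observation that rescues the proof is that Chebyshev needs only an upper bound on the variance, and such a bound flows from the uncorrelatedness of the \emph{untruncated} variables together with the uniform tail control. Indeed, for $j\ne k$, using $0\le\bar Z^n_i\le Z^n_i$, monotonicity, and $E[Z^n_jZ^n_k]=E[Z^n_j]E[Z^n_k]$, I get
$$
\Cov(\bar Z^n_j,\bar Z^n_k)\le E[Z^n_jZ^n_k]-E[\bar Z^n_j]E[\bar Z^n_k]
=E[Z^n_j]E[R^n_k]+E[R^n_j]E[Z^n_k]-E[R^n_j]E[R^n_k]\le 2C\eps .
$$
Summing over the at most $n^2$ pairs yields $\Var\big(\sum_k\bar Z^n_k\big)\le nM^2+2C\eps\,n^2$, hence $\Var(A_n)\le M^2/n+2C\eps$, and Chebyshev gives $P(|A_n|>\delta)\le\delta^{-2}(M^2/n+2C\eps)$.

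Finally I would combine the two estimates: for any $\delta>0$,
$$
P\big(|A_n+B_n|>2\delta\big)\le P(|A_n|>\delta)+P(|B_n|>\delta)\le \frac{1}{\delta^2}\Big(\frac{M^2}{n}+2C\eps\Big)+\frac{2\eps}{\delta}.
$$
Letting $n\to\infty$ (with $M$ fixed once $\eps$ is chosen) gives $\limsup_n P(\cdot)\le 2C\eps/\delta^2+2\eps/\delta$, and since $\eps>0$ was arbitrary the left-hand side is $0$, which is exactly the asserted convergence in probability. The delicate point is thus the covariance estimate above: it is precisely where pairwise uncorrelatedness of the original variables, converted through the uniform tail bound supplied by uniform integrability, yields an $O(\eps)$ control on the covariances of the truncated variables and so substitutes for the missing second moments.
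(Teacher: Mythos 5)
Your proof is correct, and it shares the decisive insight with the paper's proof: since truncation destroys uncorrelatedness, one bounds the covariances of the \emph{truncated} variables from above by writing $E[\bar Z^n_j\bar Z^n_k]\le E[Z^n_jZ^n_k]=E[Z^n_j]E[Z^n_k]$ and then absorbing the discrepancy between $E[\bar Z^n_i]$ and $E[Z^n_i]$ via the uniform tail bound; your algebra for this step is exactly the paper's, up to notation. Where you genuinely diverge is the truncation scheme and the resulting bookkeeping. The paper truncates at the \emph{growing} level $n$ (setting $\ol Z^n_k=Z^n_k 1_{\{Z^n_k\le n\}}$), handles the remainder by showing $P(S_n\ne\ol S_n)\le D(n)\to 0$ together with a mean-shift bound, and proves genuine $L^2$ convergence of the truncated centered sums; the price is that the diagonal terms are no longer trivially bounded (the crude bound $E(\ol Z^n_k)^2\le nC$ is useless), so the paper needs the refined estimate $E(\ol Z^n_k)^2\le\int_0^n 2G(y)\,dy$ with $G(y)=\sup_{n,k}E Z^n_k1_{\{Z^n_k>y\}}$ and a Ces\`aro argument. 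You instead truncate at a \emph{fixed} level $M_\eps$, which makes the diagonal trivial ($\Var(\bar Z^n_k)\le M^2$) and lets you control the tail part in $L^1$ uniformly in $n$; the price is that your tail error is only $O(\eps)$, not vanishing in $n$, so you must finish with a $\limsup_n$ followed by $\eps\downarrow 0$ rather than a clean limit statement. Both routes are sound; yours is slightly more elementary (no maximal tail function, no Ces\`aro step), while the paper's yields the sharper intermediate facts that the truncated sums converge in $L^2$ and that the truncation is inactive with probability tending to one.
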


\begin{proof}
Let us set
$\ol Z^n_k=Z^n_k 1_{\{Z^n_k\leq n\}}$
and define the sums
$S_n=\sum_{k=1}^n Z^n_k$
and
$\ol S_n=\sum_{k=1}^n \ol Z^n_k$.
Since the family $(Z^n_k)_{n,k}$
is uniformly integrable, we have
\begin{align}
\label{eq:LLN1}
C&:=\sup_{n\in\bbN,1\leq k\leq n}E Z^n_k<\infty,\\
\label{eq:LLN2}
D(n)&:=\sup_{1\leq k\leq n}E Z^n_k
1_{\{Z^n_k>n\}}\to0,\quad n\to\infty.
\end{align}
We need to prove that
$\frac{S_n-E S_n}n$
converges to zero in probability.
It follows from~\eqref{eq:LLN2} and the estimates
\begin{align*}
 P(S_n\neq\ol S_n)
&\leq\sum_{k=1}^n  P(Z^n_k>n)
\leq n\sup_{1\leq k\leq n} P(Z^n_k>n)
\leq D(n),\\
0&\leq\frac{E S_n-E\ol S_n}n
=\frac1n\sum_{k=1}^n E Z^n_k
1_{\{Z^n_k>n\}}
\leq D(n)
\end{align*}
that it is enough to prove that
$\frac{\ol S_n-E\ol S_n}n$
converges to zero in probability.
We will now prove that the latter sequence
converges to zero in~$L^2$.

We have
\begin{align}
\notag
E\left(\frac{\ol S_n-E\ol S_n}n\right)^2
&=\frac1{n^2}\Var\ol S_n\\
\label{eq:LLN3}
&\leq\frac1{n^2}\left(\sum_{k=1}^n E(\ol Z^n_k)^2
+2\sum_{1\leq k<l\leq n}\Cov(\ol Z^n_k,\ol Z^n_l)\right).
\end{align}
Due to the uniform integrability of $(Z^n_k)_{n,k}$,
$$
G(y):=\sup_{n\in\bbN,1\leq k\leq n}E Z^n_k
1_{\{Z^n_k>y\}}\to0,\quad y\to\infty.
$$
Since
$
E(\ol Z^n_k)^2
=\int_0^\infty 2y P(\ol Z^n_k>y)\,dy
\leq\int_0^n 2y P(Z^n_k>y)\,dy
\leq\int_0^n 2G(y)\,dy,
$
we get
\begin{equation}
\label{eq:LLN4}
\frac1{n^2}\sum_{k=1}^n E(\ol Z^n_k)^2
\leq\frac1n\int_0^n 2G(y)\,dy
\to0,\quad n\to\infty.
\end{equation}
Using that the random variables
$Z^n_k$, $1\leq k\leq n$,
are pairwise uncorrelated,
we get, for $k\neq l$,
\begin{align*}
\Cov(\ol Z^n_k,\ol Z^n_l)
&=E\ol Z^n_k\ol Z^n_l-E\ol Z^n_kE\ol Z^n_l
\leq E Z^n_k Z^n_l-E\ol Z^n_kE\ol Z^n_l\\
&=E Z^n_k E Z^n_l
-(E Z^n_k-E Z^n_k 1_{\{Z^n_k>n\}})
(E Z^n_l-E Z^n_l 1_{\{Z^n_l>n\}})\\
&\leq C(E Z^n_k 1_{\{Z^n_k>n\}}
+E Z^n_l 1_{\{Z^n_l>n\}})
\leq 2CD(n),
\end{align*}
where $C$ is the constant from~\eqref{eq:LLN1}.
Hence,
$\limsup_{n\to\infty}
\sup_{1\leq k<l\leq n}\Cov(\ol Z^n_k,\ol Z^n_l)=0$.
Together with~\eqref{eq:LLN4}
and the fact that the right-hand side of~\eqref{eq:LLN3}
is nonnegative, this implies that
the right-hand side of~\eqref{eq:LLN3}
converges to zero.
The proof is completed.
\end{proof}

Observe that \eqref{mean time step} implies that the sequence $(\rho^N(k))$ is pairwise uncorrelated and $E[\rho^N(k+1)|\cF_{\tau^N(k)}] = \frac{1}{N}$.

\begin{lemma}\label{convi st times}
Suppose that the family
$(N \rho^N(k))_{N\in\N,1\leq k\leq N}$
is uniformly integrable. Then
for all $s \in [0,1]$ we have
$\tau^N(\lfloor Ns \rfloor) \to s$
in probability.
\end{lemma}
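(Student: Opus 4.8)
The goal is to show $\tau^N(\lfloor Ns\rfloor)\to s$ in probability for each fixed $s\in[0,1]$. The natural strategy is to apply the weak law of large numbers from Theorem~\ref{lem:LLN} to the triangular array $Z^N_k=N\rho^N(k)$, $1\le k\le N$, and then translate the resulting $L^1$-convergence statement back into a statement about $\tau^N(\lfloor Ns\rfloor)$. Recall that $\tau^N(\lfloor Ns\rfloor)=\sum_{k=1}^{\lfloor Ns\rfloor}\rho^N(k)=\frac1N\sum_{k=1}^{\lfloor Ns\rfloor}Z^N_k$, so the sum we must control is exactly the type of normalized sum appearing in the weak LLN.

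First I would verify the two hypotheses of Theorem~\ref{lem:LLN}. The array $Z^N_k=N\rho^N(k)$ is nonnegative, and its uniform integrability over $1\le k\le N$, $N\in\N$, is precisely the hypothesis of the present lemma. For the pairwise uncorrelatedness, I would invoke the observation recorded just before the lemma: the relation \eqref{mean time step}, i.e.\ $E[\rho^N(k+1)\mid\cF_{\tau^N(k)}]=\frac1N$, implies that each $\rho^N(k)$ has the same conditional expectation $\frac1N$ given the past, and a standard tower-property computation then shows that for $j<k$ one has $E[\rho^N(j)\rho^N(k)]=E[\rho^N(j)E[\rho^N(k)\mid\cF_{\tau^N(k-1)}]]=\frac1N E[\rho^N(j)]=\frac1{N^2}=E\rho^N(j)\,E\rho^N(k)$, so the $\rho^N(k)$ (and hence the $Z^N_k$) are pairwise uncorrelated.

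With both hypotheses in hand, Theorem~\ref{lem:LLN} yields that $\frac1N\sum_{k=1}^N\big(Z^N_k-EZ^N_k\big)\to0$ in probability. Since $EZ^N_k=N E\rho^N(k)=N\cdot\frac1N=1$ for every $k$, this says $\frac1N\sum_{k=1}^N Z^N_k\to1$, i.e.\ $\tau^N(N)\to1$ in probability. To obtain the statement at an arbitrary time $s$, I would apply the theorem with the index $k$ running up to $\lfloor Ns\rfloor$ rather than $N$: the partial sum $\frac1N\sum_{k=1}^{\lfloor Ns\rfloor}\big(Z^N_k-1\big)$ has $L^2$-norm controlled by the same variance and covariance estimates used in the proof of Theorem~\ref{lem:LLN} (these estimates only get smaller when the number of summands decreases from $N$ to $\lfloor Ns\rfloor$), so this partial sum also tends to zero in probability. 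Since $\frac1N\sum_{k=1}^{\lfloor Ns\rfloor}1=\frac{\lfloor Ns\rfloor}{N}\to s$, it follows that $\tau^N(\lfloor Ns\rfloor)=\frac1N\sum_{k=1}^{\lfloor Ns\rfloor}Z^N_k\to s$ in probability.

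The one point requiring a little care, and the likely main obstacle, is the passage from the full sum (up to $N$) handled verbatim by Theorem~\ref{lem:LLN} to the truncated partial sum (up to $\lfloor Ns\rfloor$), since the theorem as stated sums exactly $n$ terms of an $n$-indexed row. I expect this to be resolved either by a direct variance estimate on the partial sums—the variance of $\frac1N\sum_{k=1}^{\lfloor Ns\rfloor}\ol Z^N_k$ (after the same truncation $\ol Z^N_k=Z^N_k1_{\{Z^N_k\le N\}}$ as in Theorem~\ref{lem:LLN}) is bounded by a sum over at most $N$ indices of the same per-term and pairwise-covariance bounds, hence tends to zero—or by reindexing so that the theorem applies directly. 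The remaining steps are routine applications of results already established.
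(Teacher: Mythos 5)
Your proposal follows the same route as the paper: apply Theorem~\ref{lem:LLN} to the array $Z^N_k=N\rho^N(k)$, using the assumed uniform integrability and the pairwise uncorrelatedness coming from \eqref{mean time step} via the tower property. The one step you leave as a sketch---passing from the full row sum $\sum_{k=1}^N$ to the partial sum $\sum_{k=1}^{\lfloor Ns\rfloor}$---is precisely where the paper has a cleaner device than either of your two proposed fixes: it simply \emph{pads the row with zeros}, setting $Z^N_k=N\rho^N(k)$ for $k\le\lfloor Ns\rfloor$ and $Z^N_k=0$ for $\lfloor Ns\rfloor<k\le N$. This padded array is still nonnegative, uniformly integrable, and pairwise uncorrelated (a zero random variable is uncorrelated with anything), so Theorem~\ref{lem:LLN} applies verbatim as a black box, and then $\frac1N\sum_{k=1}^N Z^N_k=\tau^N(\lfloor Ns\rfloor)$ while $\frac1N\sum_{k=1}^N E Z^N_k=\frac{\lfloor Ns\rfloor}{N}\to s$. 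Your fallback of redoing the variance estimates for the partial sums does work, but it forces you to reopen the proof of the theorem rather than cite it; also, your remark that the estimates ``only get smaller'' with fewer summands is not quite right as stated, since the covariances $\Cov(\ol Z^N_k,\ol Z^N_l)$ can be negative---what saves the argument is the per-pair upper bound $\Cov(\ol Z^N_k,\ol Z^N_l)\le 2CD(N)$ together with the count of at most $N^2$ pairs, not monotonicity of the sums. So: correct in substance, same key lemma, but the paper's zero-padding is the tidy realization of the ``reindexing'' you anticipated.
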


\begin{proof}
Let $s \in [0,1]$. Set $Z^N_k = N \rho^N(k)$ if $k \le \lfloor Ns \rfloor$ and $Z^N_k = 0$ else. Notice that the family $(Z^N_k)_{1 \le k \le N}$ satisfies the assumptions of Theorem~\ref{lem:LLN}, and hence $\frac1N \sum_{k=1}^N (Z^N_k - E(Z^N_k))$ converges to zero in probability. Notice that
\be
\frac1N \sum_{k=1}^N Z^N_k = \sum_{k=1}^{\lfloor Ns \rfloor} \rho^N(k) = \tau^N(\lfloor Ns \rfloor),
\ee
and $\lim_{N \to \infty}\frac1N \sum_{k=1}^N E(Z^N_k) = \lim_{N \to \infty} \frac1N \lfloor Ns \rfloor = s$. Consequently, $\tau(\lfloor Ns \rfloor)$ converges to $s$ in probability, as $N \to \infty$.
\end{proof}

The final arguments for proving Theorem~\ref{convi sums} are now standard
(cf.\ Section~8.6 in~\cite{durrett}).
We denote by $\|\cdot\|_{C[0,1]}$
the sup norm in $C[0,1]$.

\begin{proof}[Proof of Theorem~\ref{convi sums}]
We can assume that $Y^N_k = M_{\tau^N(k)}$
and that the family
$N \rho^N(k)$, $N\in\N$, $1\leq k\leq N$,
is uniformly integrable
(see Lemma~\ref{lemma ui}). Recall that $Y^N_{Nt} = Y^N_{\lfloor Nt \rfloor} + (Nt - \lfloor Nt \rfloor) (Y^N_{\lfloor Nt \rfloor+1} - Y^N_{\lfloor Nt \rfloor})$ for $t \in [0,1]$.

First we show that $\|Y^N_{N\cdot} - M_\cdot\|_{C[0,1]} \to 0$ in probability.
To this end let $\eps > 0$. For $\delta > 0$  let
\be
A(\delta) = \{|M_t-M_s|  < \frac{\eps}{2} \text{ for all } t,s \in [0,1] \text{ such that } |t-s| \le 2\delta\}.
\ee
We choose $\delta$ such that $\frac{1}{\delta} \in \IN$ and $P(A(\delta)) > 1- \frac{\eps}{2}$. Next we define
\be
C(N,\delta) = \{|\tau^N(\lfloor Nk\delta \rfloor) - k\delta| \le \delta \text{ for } k=1, \ldots, \frac{1}{\delta} \}.
\ee
By Lemma~\ref{convi st times} there exists $N_0 \in \IN$ such that for all $N \ge N_0$ we have $P(C(N,\delta)) > 1-\frac{\eps}{2}$.

Notice that on the event $C(N,\delta)$ we have $|\tau^N(\lfloor Ns \rfloor) - s| \le 2\delta$ for all $s \in [0,1]$. In the following suppose that $A(\delta) \cap C(N,\delta)$ occurs. Then for $s=\frac{m}{N}$ we have $|\tau^N(m) - \frac{m}{N}| \le 2 \delta$ and hence
\be
|Y^N_{Ns} - M_s| = |Y^N_{m} - M_{\frac{m}{N}}| < \frac{\eps}{2}.
\ee
Let now $s \in (\frac{m}{N}, \frac{m+1}{N})$. Set $\theta = s-\frac{m}{N}$ and notice that for all $N \ge \frac{1}{2\delta}$
\begin{align*}
|Y^N_{Ns} - M_s| &\le \theta |Y^N_{m} - M_{\frac{m}{N}}| + (1-\theta) |Y^N_{m+1} - M_{\frac{m+1}{N}}|\\
&\hspace{1em}+ \theta|M_{\frac{m}{N}} -M_s| + (1-\theta)|M_{\frac{m+1}{N}} -M_s|
< \eps.
\end{align*}
Consequently, for all $N \ge N_0 \vee \frac{1}{2\delta}$ we have $P(\|Y^N_{N\cdot} - M_\cdot\|_{C[0,1]} > \eps) < \eps$. Since $\eps$ is arbitrary, we obtain that $\|Y^N_{N\cdot} - M_\cdot\|_{C[0,1]} \to 0$ in probability.

To complete the proof, let $\psi\colon C[0,1] \to \R$ be a bounded function that is continuous with respect to the sup norm. It is straightforward to show that $\lim_{N\to \infty} E\psi(Y^N_{N\cdot}) = E\psi(M_\cdot)$, and hence the theorem is proved.
\end{proof}

With a localization argument we can relax the assumption on~$\eta$:
\bi
\item[(C2)] $|\eta|$ and $\frac{1}{|\eta|}$ are locally bounded on $I$.
\ei

\begin{theo}\label{convi sums2}
Suppose (C2) and that
$\mu$ has a compact support.
Then the processes $(Y^N_{Nt})_{t\in[0,1]}$
converge to $(M_t)_{t \in [0,1]}$ in distribution,
as $N \to \infty$.
\end{theo}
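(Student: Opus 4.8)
The plan is to reduce Theorem~\ref{convi sums2} to the already-established Theorem~\ref{convi sums} by a localization argument. Under (C2) the diffusion coefficient $\eta$ is bounded away from $0$ and from $\pm\infty$ only on compact subintervals of $I$, so I cannot apply Theorem~\ref{convi sums} directly. Instead, for each pair of levels $l<\alpha<m<\beta<r$ I would introduce a modified diffusion coefficient $\eta^{\alpha,\beta}$ that coincides with $\eta$ on $[\alpha,\beta]$ and is extended (for instance as a suitable constant, or otherwise bounded and bounded away from $0$) outside $[\alpha,\beta]$, so that $\eta^{\alpha,\beta}$ satisfies condition~(C1). Let $M^{\alpha,\beta}$ denote the corresponding diffusion started at $m$, and let $(Y^{N,\alpha,\beta})$ be the associated scaled random walks embedded into $M^{\alpha,\beta}$ via the scale factors $a_N^{\alpha,\beta}$ from Section~\ref{sec3}. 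By Theorem~\ref{convi sums}, $(Y^{N,\alpha,\beta}_{Nt})_{t\in[0,1]}$ converges in distribution to $(M^{\alpha,\beta}_t)_{t\in[0,1]}$ as $N\to\infty$.

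The key point is that the two systems agree until the diffusion exits $[\alpha,\beta]$. Concretely, let $\sigma^{\alpha,\beta}=\inf\{t\geq0:M_t\notin(\alpha,\beta)\}$ be the exit time of $M$ from $(\alpha,\beta)$, and similarly for $M^{\alpha,\beta}$. Because $\eta=\eta^{\alpha,\beta}$ on $[\alpha,\beta]$, the laws of $M$ and $M^{\alpha,\beta}$ coincide up to the exit time by uniqueness in law of the stopped martingale problem; moreover the scale factors satisfy $a_N(y)=a_N^{\alpha,\beta}(y)$ for $y$ in the interior of $[\alpha,\beta]$, since $G_y$ depends only on the values of $\eta$ through $q(y,\cdot)$ evaluated on the support of $K(y,a,\cdot)$, which---once $\mu$ has compact support and $a_N(y)=O(N^{-1/2})$ by Lemma~\ref{convscalefactor}---stays inside $[\alpha,\beta]$ for all sufficiently large $N$ and all $y$ in a slightly smaller subinterval. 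Hence the embedded random walks $(Y^N_k)$ and $(Y^{N,\alpha,\beta}_k)$ can be coupled to coincide up to the first index at which they leave $[\alpha,\beta]$.

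The remaining step is to control the probability of ever leaving $[\alpha,\beta]$ on the time horizon $[0,1]$. Since $\mu$ has compact support and $a_N(y)=O(N^{-1/2})$, each increment of the random walk is of order $N^{-1/2}$, so the process cannot jump far across a boundary; combined with the weak convergence of $(Y^{N,\alpha,\beta}_{N\cdot})$ to the continuous limit $M^{\alpha,\beta}$, I would show that
\begin{equation*}
\limsup_{N\to\infty}P\Big(\sup_{t\in[0,1]}Y^N_{Nt}\geq\beta\ \text{or}\ \inf_{t\in[0,1]}Y^N_{Nt}\leq\alpha\Big)\leq P\Big(\sup_{t\in[0,1]}M_t\geq\beta\ \text{or}\ \inf_{t\in[0,1]}M_t\leq\alpha\Big)+o(1).
\end{equation*}
Because $M$ has continuous paths and does not explode in finite time (Feller's test, as recalled after~\eqref{eq:eiit10}), the right-hand side tends to $0$ as $\alpha\searrow l$ and $\beta\nearrow r$. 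Given $\eps>0$, I would therefore fix $\alpha,\beta$ so that both $M$ and the approximating walks stay in $[\alpha,\beta]$ throughout $[0,1]$ with probability at least $1-\eps$, use the coupling to conclude that $(Y^N_{N\cdot})$ and $(Y^{N,\alpha,\beta}_{N\cdot})$ agree on that high-probability event, and then invoke Theorem~\ref{convi sums} together with the fact that $M^{\alpha,\beta}$ and $M$ agree before exiting $(\alpha,\beta)$. Passing to bounded continuous test functionals $\psi$ on $C[0,1]$ and letting $\eps\to0$ yields $\lim_{N\to\infty}E\psi(Y^N_{N\cdot})=E\psi(M_\cdot)$, which is the desired weak convergence. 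The main obstacle I anticipate is making the coupling rigorous---precisely matching the embedding stopping times and the scale factors of the two systems up to the exit time, and verifying that the boundary is not overshot in the passage from the discrete walk to the continuous exit event.
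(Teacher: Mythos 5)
There is a genuine gap, and it sits exactly at the step you flag as routine: the claim that
\[
P\Bigl(\sup_{t\in[0,1]}M_t\geq\beta\ \text{or}\ \inf_{t\in[0,1]}M_t\leq\alpha\Bigr)\longrightarrow 0
\qquad\text{as }\alpha\searrow l,\ \beta\nearrow r .
\]
This is true only when both endpoints are \emph{inaccessible}. Feller's test, as recalled after~\eqref{eq:eiit10}, says that $q(y,l+)=\infty$ iff $M$ attains $l$ in finite time with probability zero; but the hypotheses of Theorem~\ref{convi sums2} explicitly allow accessible boundaries, since the standing assumptions of Section~\ref{sec weak conv} include Case~2 (and Case~4) with $q(l+)<\infty$ and $\mu(\{\inf\supp\mu\})>0$ (condition~\eqref{cond a2l}). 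A concrete instance inside the scope of the theorem is Example~\ref{ex:11102014a1}: $M$ a Brownian motion on $I=(0,\infty)$ started at $m>0$ and absorbed at $0$, with $\mu=\frac12(\delta_{-1}+\delta_1)$; here (C2) holds and $\mu$ has compact support, yet $P(M \text{ hits } 0 \text{ before time } 1)>0$, so the exit probability from $[\alpha,\beta]$ stays bounded away from zero no matter how you choose $\alpha,\beta$. Your localization event never becomes a high-probability event, your modified diffusion $M^{\alpha,\beta}$ (which keeps moving after $M$ would have been absorbed) has genuinely different law on $[0,1]$, and the final limiting argument collapses. This is not a technicality to be patched: it is the reason the paper's proof, after its own localization, splits into cases according to whether $q(l+)$ and $q(r-)$ are finite. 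For an accessible boundary the paper exploits absorption: with high probability, once $M$ leaves $(l_n,r_n)$ it remains within $\eps$ of $l$ until it is absorbed, and the walk values after that time are themselves values of $M$ after that time, so on the event where the coupling breaks the sup-norm error is still at most of order $\eps$. That compensating argument is what your proposal is missing. Note also that the paper localizes differently from you — it keeps the original diffusion $M$ and merely resets the embedding durations to $\wh\rho^N(k+1)=\frac1N$ after the exit time $H(l_n,r_n)$, rather than modifying $\eta$ — which avoids having to compare two different diffusions at all.

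A secondary flaw: under (C2) alone you cannot invoke Lemma~\ref{convscalefactor} to get $a_N(y)=O(N^{-1/2})$, since that lemma requires a \emph{global} bound $|\eta|\le U$ on $I$. The uniform smallness $\sup_{y\in(l_n,r_n)}a_N(y)\to0$, which you also need for your claim that the scale factors of the two systems coincide on slightly smaller subintervals, must be proved separately; the paper does this with a two-step Fatou argument applied to~\eqref{sf eq2}. This part is fixable, but as written your coupling of $a_N$ with $a_N^{\alpha,\beta}$ rests on an inapplicable lemma.
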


\begin{proof}
The idea is first to redefine the times $\rho^N(k)$
to make sure that the family
$N \rho^N(k)$, $N\in\N$, $1\leq k\leq N$,
is uniformly integrable.
To this end choose a sequence of bounded intervals
$[l_n, r_n] \subset I$
such that $l_n \downarrow l$ and $r_n\uparrow r$.
Let $H(l_n,r_n)$ denote the first exit time
of $M$ from $(l_n,r_n)$.
For a fixed~$n$
let $\wh \rho^N(k+1) = \frac1N$
if $\tau^N(k) > H(l_n, r_n)$,
and let $\wh \rho^N(k+1) = \rho^N(k+1)$ otherwise.
We set $\wh \tau^N(k) = \sum_{j=1}^k \wh \rho^N(j)$.
Notice that $(\wh \tau^N(k))$ and $(\wh \rho^N(k))$
depend on the localizing parameter~$n$.

Next observe that $\lim_{N\to \infty} \sup_{y\in (l_n,r_n)}a_N(y)=0$. Indeed, by~\eqref{sf eq2}, we have
\ben\label{eq:18062015}
\int q(y,y+a_N(y)x)\,\mu(dx)\le\frac 1N.
\een
Fatou's lemma yields
$\int q(y,y+\liminf_{N\to\infty}a_N(y)x)\,\mu(dx)=0$.
Since $\mu\ne\delta_0$ and, clearly,
the sequence $\{a_N(y)\}_{N\in\N}$ is decreasing,
we obtain  $\lim_{N\to\infty}a_N(y)=0$.
Performing two changes of variables in \eqref{eq:18062015} leads to
\be
Na^2_N(y)\int\int_0^x\int_0^u\frac 2{\eta^2(y+a_N(y)r)}\,dr\,du\,\mu(dx) \le1.
\ee
Using Fatou's lemma again
and taking into account the
already established relation
$\lim_{N\to\infty}a_N(y)=0$,
we get
\be
\limsup_{N\to \infty}Na_N^2(y)\le \frac 1{\int\int_0^x\int_0^u{\liminf_{N\to \infty}} \frac  2{\eta^2(y+a_N(y)r)}\,dr\,du\,\mu(dx)}
\le
\frac{\limsup_{z\to y}\eta^2(z)}{\int x^2\,\mu(dx)}.
\ee
From (C2) we deduce that $\lim_{N\to \infty} \sup_{y\in (l_n,r_n)}a_N(y)=0$.

Since $\mu$ has a compact support
one can show by an adaptation
of Lemma~\ref{lemma ui}
that, for each fixed~$n$,
the family
$(N \wh \rho^N(k))_{N\in\N,1\le k\le N}$
is uniformly integrable.
Lemma~\ref{convi st times} implies
that $\wh \tau^N(\lfloor Ns \rfloor) \to s$ in probability
for all $s\in[0,1]$.
Let $\wh Y^N_k = M_{\wh \tau^N(k)}$
and $Y^N_k=M_{\tau^N(k)}$.
As in the proof of Theorem~\ref{convi sums}
one can show that
$\|\wh Y^N_{N\cdot} - M_\cdot\|_{C[0,1]} \to 0$
in probability.

Let us consider several cases.
First let $q(l+)=q(r-)=\infty$,
that is, both endpoints $l$ and~$r$
are inaccessible.
Fix $\eps>0$
and choose~$n$ such that
$P(H(l_n,r_n)\ge2)>1-\frac\eps2$.
Then we choose $N_0$ such that,
for any $N\ge N_0$, we have
$P(\wh\tau^N(N)\le2)>1-\frac\eps2$.
On the event $\{H(l_n,r_n)\ge2,\,\wh\tau^N(N)\le2\}$
of probability at least $1-\eps$
we have $Y^N_k=\wh Y^N_k$ for all $0 \le k \le N$.
Thus,
$\|Y^N_{N\cdot} - M_\cdot\|_{C[0,1]} \to 0$
in probability.

Let now $q(l+)<\infty$ and $q(r-)=\infty$,
i.e.\ $l$ is accessible, $r$ is inaccessible.
In this case, $l>-\infty$ and
$H(l_n,r_n)\to H(l)$~a.s.\ as $n\to\infty$,
where $H(l)$ denotes the hitting time of $l$
by the process~$M$.
Fix $\eps>0$ and choose~$n$
such that $P(A_n)<\frac\eps3$
and $P(B_n)<\frac\eps3$ with
\be
A_n&=&\left\{H(l)\le3\text{ and }
\exists\,s\in[H(l_n,r_n),H(l)]
\text{ such that }M_s>l+\eps\right\},\\
B_n&=&\left\{H(l)>3\text{ and }H(l_n,r_n)<2\right\}.
\ee
Then choose $N_0$ such that, for any $N\ge N_0$,
we have $P(\wh\tau^N(N)\le2)>1-\frac\eps3$.
Given an event~$A$, by $A^c$ we denote
the complement of~$A$.
For $N\ge N_0$, on the event
\be
\{\wh\tau^N(N)\le2\}\cap(A_n\cup B_n)^c
\ee
of probability at least $1-\eps$
we have either
\be
H(l)>3,\;H(l_n,r_n)\ge2,\text{ hence }
Y^N_k=\wh Y^N_k\text{ for all }0\le k\le N,
\ee
or
\be
&&H(l)\le3,\;M_s\in[l,l+\eps]
\text{ for }s\in[H(l_n,r_n),H(l)],\\
&&\text{hence }|Y^N_{Ns}-M_s|
\le\eps\text{ whenever }
Y^N_{Ns}\ne\wh Y^N_{Ns}, s\in[0,1].
\ee
Thus, $\|Y^N_{N\cdot}-M_\cdot\|_{C[0,1]}\to0$
in probability.

The remaining cases are considered in a similar way.
\end{proof}

We can also combine the boundedness assumptions
on $\eta$ and on the support of $\mu$ in other ways:

\begin{theo}\label{th:1706a1}
Assume that, for any $y\in I$,
$|\eta|$ and $\frac1{|\eta|}$
are bounded on $(l,y)$
(the bounds may depend on~$y$)
and that $\sup\supp\mu<\infty$.
Then the processes $(Y^N_{Nt})_{t\in[0,1]}$
converge to $(M_t)_{t \in [0,1]}$ in distribution,
as $N \to \infty$.
\end{theo}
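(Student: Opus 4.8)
The plan is to mimic the proof of Theorem~\ref{convi sums2} as closely as possible, the key difference being that the standing boundedness of $|\eta|$ and $\frac1{|\eta|}$ on every left segment $(l,y)$ makes localization near~$l$ unnecessary, so that one localizes at the right endpoint~$r$ alone. Concretely, I would fix a sequence $r_n\uparrow r$ with $r_n\in I$, denote by $U_n$ and $L_n>0$ an upper and a strictly positive lower bound for $|\eta|$ on $(l,r_n)$, and set $H(r_n)=\inf\{t\ge0:M_t\ge r_n\}$. For each fixed~$n$ I define, exactly as in Theorem~\ref{convi sums2}, the modified increments $\wh\rho^N(k+1)=\rho^N(k+1)$ when $\tau^N(k)\le H(r_n)$ and $\wh\rho^N(k+1)=\frac1N$ otherwise, together with $\wh\tau^N(k)=\sum_{j=1}^k\wh\rho^N(j)$ and $\wh Y^N_k=M_{\wh\tau^N(k)}$.

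First I would record that $\int x^2\,\mu(dx)<\infty$ and that the scale factor vanishes uniformly on each left segment, i.e.\ $\sup_{y\in(l,r_n)}a_N(y)\to0$ as $N\to\infty$. Finiteness of the second moment follows from the standing assumption~\eqref{int condi ref distri}: shrinking the admissible~$a$ (which only decreases $G_m(a)$) one may assume that the embedded support $m+a\,\supp\mu$ lies in $(l,r_1)$, and there $q(m,m+ax)\ge (ax)^2/U_1^2$ forces $\int x^2\,\mu(dx)<\infty$. Pointwise vanishing $a_N(y)\downarrow0$ is obtained as in Theorem~\ref{convi sums2} from $G_y(a_N(y))\le\frac1N$ and the strict monotonicity of $G_y$; once it is known, for large~$N$ the upward overjump $a_N(y)\sup\supp\mu$ of a step started at $y<r_n$ is so small that the whole embedded support $[y+a_N(y)\inf\supp\mu,\,y+a_N(y)\sup\supp\mu]$ sits inside $(l,r_{n+1})$, whence the change-of-variables estimate of Theorem~\ref{convi sums2}, together with $|\eta|\le U_{n+1}$ there, gives $N a_N^2(y)\le U_{n+1}^2/\!\int x^2\,\mu(dx)$ uniformly in $y\in(l,r_n)$.

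With these two facts the uniform integrability of the array $\bigl(N\wh\rho^N(k)\bigr)_{N\in\IN,\,1\le k\le N}$ follows by the obvious adaptation of Lemma~\ref{lemma ui}: the increments that equal $\frac1N$ after the passage to~$r_n$ are harmless, while for a genuine step started at some $y<r_n$ the embedding from the Appendix keeps~$M$ inside its embedded support, hence inside $(l,r_{n+1})$ by the previous paragraph, so that $\frac1{\eta^2}\le\frac1{L_{n+1}^2}$ along the embedding and $\wh\rho^N(k)\stackrel{d}{\le}\frac{A_n^2}{L_{n+1}^2\,N}\,X$ with $A_n=U_{n+1}/\sqrt{\int x^2\,\mu(dx)}$ and the fixed integrable variable $X$ from the proof of Lemma~\ref{lemma ui}. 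Thus $\frac{A_n^2}{L_{n+1}^2}X$ stochastically dominates every $N\wh\rho^N(k)$, Lemma~\ref{convi st times} yields $\wh\tau^N(\lfloor Ns\rfloor)\to s$ in probability for each $s\in[0,1]$, and repeating verbatim the path estimate from the proof of Theorem~\ref{convi sums} gives $\|\wh Y^N_{N\cdot}-M_\cdot\|_{C[0,1]}\to0$ in probability.

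The step I expect to be the main obstacle is the passage from $\wh Y^N$ back to $Y^N$, precisely because near~$r$ there is no control on~$\eta$ to lean on. Here I would split according to the accessibility of~$r$. If $q(r-)=\infty$ (in particular whenever $r=\infty$) the endpoint is inaccessible and the clean argument of Theorem~\ref{convi sums2} applies unchanged: given $\eps>0$ one picks~$n$ with $P(H(r_n)\ge2)>1-\frac\eps2$ and then~$N_0$ with $P(\wh\tau^N(N)\le2)>1-\frac\eps2$, so that on an event of probability at least $1-\eps$ no passage to~$r_n$ occurs within the horizon and $Y^N_k=\wh Y^N_k$ for all $0\le k\le N$. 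If $q(r-)<\infty$ then $r<\infty$ is accessible and $H(r_n)\to H(r)$~a.s., and I would run the mirror image of the $l$-accessible argument of Theorem~\ref{convi sums2}, bounding the excursion of~$M$ in $[r_n,r]$ over $[H(r_n),H(r)]$ by events of small probability. What replaces the missing $\eta$-bound near~$r$ is twofold: by path continuity the window $[H(r_n),H(r)]$ shrinks to a point as $n\to\infty$, so $M$, and therefore its samples $Y^N_{Ns}$, stay in $[r-\eps,r]$ off a small event; and $\sup\supp\mu<\infty$ forces $a_N(y)\le\bar a(y)=\frac{r-y}{\sup\supp\mu}\to0$ as $y\nearrow r$ through~\eqref{sf eq2}, which both prevents the walk from overshooting~$r$ and, by scaling down the whole step distribution $a_N(y)\mu$, makes a downward excursion below $r-\eps$ improbable. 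Consequently $|Y^N_{Ns}-M_s|\le\eps$ whenever $Y^N_{Ns}\ne\wh Y^N_{Ns}$, and $\|Y^N_{N\cdot}-M_\cdot\|_{C[0,1]}\to0$ in probability follows. The left endpoint needs no separate treatment: if~$l$ is accessible, both~$M$ and the walk are absorbed there (recall $a_N(l)=0$), so $Y^N$ and $\wh Y^N$ remain equal. Weak convergence then follows as in Theorem~\ref{convi sums}.
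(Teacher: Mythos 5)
Your proposal is correct and is essentially the proof the paper intends: the paper's own argument for Theorem~\ref{th:1706a1} is just the remark that one repeats the proof of Theorem~\ref{convi sums2}, and your one-sided localization at $r$ — with $\sup\supp\mu<\infty$ ensuring that every genuine step started below $r_n$ has its embedded support inside $(l,r_{n+1})$, where $|\eta|$ and $1/|\eta|$ are bounded, so that the uniform-integrability estimate, Lemma~\ref{convi st times}, and the accessibility case analysis at $r$ all carry over — is exactly that adaptation. The only soft spot, deducing the uniform bound $Na_N^2(y)\le U_{n+1}^2/\int x^2\,\mu(dx)$ on $(l,r_n)$ from the pointwise vanishing of $a_N(y)$, is glossed at the same level as in the paper's proof of Theorem~\ref{convi sums2} and is easily repaired (e.g.\ by first bounding the truncated scale $a_N(y)\wedge\frac{r_{n+1}-r_n}{\sup\supp\mu}$, whose embedded support automatically stays in $(l,r_{n+1})$, and then noting the truncation is inactive for large $N$).
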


The proof is similar to that of
Theorem~\ref{convi sums2}.
Clearly, Theorem~\ref{th:1706a1}
has its analogue ``at~$r$''.

\subsection*{Examples}

We close the section by illustrating our results with several examples.
\begin{ex}[Brownian motion]
\label{ex:bm}
Let $M$ be a Brownian motion starting from some
$m\in\R$,
i.e.\ we have $l=-\infty$, $r=\infty$ and $\eta\equiv1$.
Then $q(y,x)=(x-y)^2$, for $y,x\in\R$, and
\begin{gather*}
G_y(a)=a^2\int x^2\,\mu(dx),\quad y\in\R,\;a\geq0.
\end{gather*}
Therefore, condition~(A1) of Section~\ref{subsec:2306a1}
is satisfied if and only if $\sigma^2:=\int x^2\,\mu(dx)<\infty$.
In this case, the scaled random walk $(Y^N_k)$ is determined by the scale factor
\be
a_N(y)=\frac1{\sqrt{N\sigma^2}},
\ee
which does not depend on the state~$y$. 
Thus, since (C1) is satisfied,
Theorem~\ref{convi sums}
yields weak convergence of $(Y^N_{N t })$ to $(M_t)$
under the assumptions that
$\mu\ne\delta_0$ is centered
and $\int x^2\,\mu(dx)<\infty$. This is exactly the Donsker--Prokhorov
invariance principle.
\end{ex}

\begin{ex}[Diffusion between two media]
\label{ex:2media}
Let $l=-\infty$, $r=\infty$ and, with some
$A\in\R\setminus\{0\}$,
\be
\eta(x)=1_{(0,\infty)}(x)+A1_{(-\infty,0]}(x),
\quad x\in\R.
\ee
Notice that we have
\be
\text{for } y \ge 0: \quad q(y,x) &=&
\begin{cases}
(x-y)^2, & x\geq0,\\
y^2 - 2xy + \frac{1}{A^2} x^2, & x<0,
\end{cases}\\[1mm]
\text{for } y \le 0: \quad q(y,x) &=&
\begin{cases}
\frac{1}{A^2} (x-y)^2, & x<0,\\
\frac{1}{A^2} y^2 - \frac{2}{A^2} xy + x^2, & x\geq0.
\end{cases}
\ee
Since, for appropriate $0<c_1<c_2<\infty$,
we have $c_1 (x-y)^2 \le q(y,x) \le c_2 (x-y)^2$,
condition~(A1) is satisfied if and only if $\mu$ has a finite second moment.
Again, (C1)~is satisfied, hence
the processes
$(Y^N_{Nt})$ converge in distribution to $(M_t)$
for any such~$\mu$.
\end{ex}

\begin{ex}[Geometric Brownian motion]
\label{ex:gbm}
Let $l=0$, $r=\infty$ and $\eta(x) = x$ on~$I$.
For $y>0$, we have
\be
q(y,x) =\begin{cases}
2\frac{x-y}{y} - 2 \log\frac xy, & \text{ if } x > 0, \\
\infty, & \text{ if } x \le 0.
\end{cases}
\ee
Since, for fixed $y>0$, $q(y,x)$ has linear growth
as $x\to\infty$, condition~(A2)
of Section~\ref{subsec:2306a2}
is satisfied
if and only if $\inf\supp\mu>-\infty$.
For all such measures~$\mu$,
\eqref{cond a3lrinfty}~is satisfied
due to Proposition~\ref{suffcond_2},
and hence Theorem~\ref{existence_case2} applies;
that is, for sufficiently large $N\in\N$,
Problem~(P) has a solution
with scale factor $a_N$
satisfying~\eqref{sf eq}.
Since (C2)~holds true,
by Theorem~\ref{convi sums2},
the processes
$(Y^N_{Nt})$ converge in distribution to $(M_t)$
for any~$\mu$ with a compact support.
\end{ex}

\appendix
\section{Appendix}
We use the setting and notations of Section~\ref{sec1}.
In particular, we consider a weak solution
$(M,W)$ of~\eqref{sde}, where the initial
condition $M_0$ has distribution $\gamma$,
and we treat the embedding problem~\eqref{eq:eiit7},
where $a\colon I\to(0,\infty)$ is a given Borel function.
Let us now briefly explain,
following~\cite{AHS}, a solution method
of~\eqref{eq:eiit7},
which gives an embedding stopping time
satisfying~\eqref{eq:100914a3}
provided~\eqref{eq:100914a2} holds true.

Let $\tilde W$ be an $(\tilde\cF_t)$-Brownian motion on some
$(\tilde\Omega,\tilde\cF,(\tilde\cF_t),\tilde P)$
and $\tilde M_0$ an $\tilde\cF_0$-measurable
random variable with distribution~$\gamma$.
For $y\in I$,
let $F_y$ and $F_\mu$
be the distribution functions of $K(y,a(y),\cdot)$
and of $\mu$,
as well as $F_y^{-1}$ and $F_\mu^{-1}$
their generalized inverse functions
(that is,
$F_y^{-1}(r)=\inf\{x\in\R:F_y(x)>r\}$,
$r\in(0,1)$,
and the same formula holds for~$F_\mu^{-1}$).
For $y\in I$, $t\in[0,1]$ and $x\in\R$, we define
\be
g(y,t,x)&=&\tilde E
[F_y^{-1} \circ \Phi(\tilde W_1) | \tilde W_t=x],\\
b(t,x)&=&\tilde E
[F_\mu^{-1} \circ \Phi(\tilde W_1) | \tilde W_t=x],
\ee
where $\Phi$ denotes the
standard normal distribution function,
and notice that
\ben\label{eq:100914a4}
g(y,t,x) = y + a(y) b(t,x).
\een
Let us define the $(\tilde\cF_t)$-martingale
$N_t=b(t,\tilde W_t)$, $t\in[0,1]$,
and the process
$L_t=g(\tilde M_0,t,\tilde W_t)
=\tilde M_0+a(\tilde M_0)N_t$,
$t\in[0,1]$
(the latter process can fail to be a martingale
because it can fail to be integrable).
Observe that
$N_1$ has the distribution $\mu$,
hence
\ben\label{eq:ahsc0}
\Law(L_1 | \tilde\cF_0)=K(\tilde M_0,a(\tilde M_0),\cdot).
\een
Moreover, we have
\ben\label{eq:ahsc1}
\tilde P\big((L_t)_{t\in[0,1]}\in A | \tilde\cF_0\big)
=G(\tilde M_0,A),\quad A\in\cB(C[0,1]),
\een
where the kernel $G$ is given by the formula
\ben\label{eq:ahsc2}
G(y,A)=\tilde P\big(
\big(g(y,t,\tilde W_t)\big)_{t\in[0,1]}\in A\big),
\quad y\in I,\;A\in\cB(C[0,1]).
\een
One can also check that
the function $b$ is smooth on $[0,1)\times\R$ and,
for any $t\in[0,1)$,
the function $b(t,\cdot)$
is a strictly increasing
bijective mapping
$\R\to(\inf\supp\mu,\sup\supp\mu)$.
Let $g^{-1}$ denote the inverse of $g$
in the last argument, which is well defined
when the se\-cond argument $t\in[0,1)$.

A straightforward generalization
of Theorems~1 and~3
and Lemma~2 in~\cite{AHS}
now yields the following statement.

\begin{propo}\label{prop:100914a1}
Assume that \eqref{eq:100914a2} holds true.
Then the ODE
\ben\label{ode}
\delta'(t) = \frac{a^2(M_0)
b_x^2(t,g^{-1}(M_0,t,M_{\delta(t)}))}{\eta^2(M_{\delta(t)})},
\quad t\in[0,1),
\quad \delta(0) = 0,
\een
has a solution on $[0,1)$ for $P$-almost all paths.
Here, $b_x$ denotes the partial derivative
of $b$ with respect to the second argument.
We set
\ben\label{odea}
\delta(1)=\lim_{t\uparrow1}\delta(t),
\een
which is well defined $P$-a.s.\
because $\delta$ is nondecreasing.
Moreover,
$(\delta(t))_{t\in[0,1]}$
is an $(\cF_t)$-time change,
the $(\cF_t)$-stopping time $\delta(1)$ satisfies
\ben\label{eq:100914a5}
E[\delta(1)|\cF_0]=Q(M_0)
\quad P\text{-a.s.},
\een
the process
\ben\label{eq:100914a10}
Z_t=\frac1{a(M_0)}(M_{\delta(t)}-M_0),
\quad t\in[0,1],
\een
is an $(\cF_{\delta(t)})$-martingale,
and
\ben\label{eq:100914a11}
\Law(Z_t;\,t\in[0,1]\,|\,\cF_0)
=\Law(N_t;\,t\in[0,1])
\quad P\text{-a.s.},
\een
where the left-hand side
is the notation for
the regular conditional distribution
of the process $(Z_t)_{t\in[0,1]}$
with respect to~$\cF_0$,
while the right-hand side
is the notation for
the unconditional distribution
of the process $(N_t)_{t\in[0,1]}$
(that is, the former, which is in general
a kernel depending on $\omega$,
equals the latter for almost all paths).
\end{propo}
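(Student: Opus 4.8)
The plan is to work conditionally on $\cF_0$, that is, to freeze the starting value $M_0=y$ and the scale $a(y)$, so that the assertion reduces to the fixed-start embedding of~\cite{AHS}; the regular conditional versions then follow by checking measurability in~$y$ and integrating against~$\gamma$. First I would record the two analytic facts that drive everything. Since $N_t=b(t,\tilde W_t)=\tilde E[F_\mu^{-1}\circ\Phi(\tilde W_1)\,|\,\tilde W_t]$ is a uniformly integrable martingale, the smooth function $b$ solves the backward heat equation $b_t+\tfrac12 b_{xx}=0$ on $[0,1)\times\R$, and for each $t<1$ the map $b(t,\cdot)$ is a strictly increasing smooth bijection with $b_x>0$. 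Consequently $g(y,t,\cdot)=y+a(y)b(t,\cdot)$ and its inverse $g^{-1}(y,t,\cdot)$ are smooth for $t<1$, and $g$ inherits $g_t+\tfrac12 g_{xx}=0$.

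Next I would explain where the ODE comes from and why it forces the embedding. The idea is the Bass time change: we seek $\delta$ so that $B_t:=g^{-1}(M_0,t,M_{\delta(t)})$ becomes a Brownian motion, equivalently $M_{\delta(t)}=g(M_0,t,B_t)=M_0+a(M_0)b(t,B_t)$. The time-changed local martingale $Z_t=\tfrac1{a(M_0)}(M_{\delta(t)}-M_0)$ has quadratic variation $\langle Z\rangle_t=\tfrac1{a^2(M_0)}\int_0^{\delta(t)}\eta^2(M_u)\,du=\tfrac1{a^2(M_0)}\int_0^t\eta^2(M_{\delta(s)})\,\delta'(s)\,ds$, and substituting $\delta'(s)=a^2(M_0)b_x^2(s,B_s)/\eta^2(M_{\delta(s)})$ collapses this to $\langle Z\rangle_t=\int_0^t b_x^2(s,B_s)\,ds$. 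Writing $\beta(t,\cdot)=b(t,\cdot)^{-1}$ and applying It\^o's formula to $B_t=\beta(t,Z_t)$, the relations $\beta_z=1/b_x$, $\beta_t=-b_t/b_x$ and $\beta_{zz}b_x^2=-b_{xx}/b_x$, obtained by differentiating $\beta(t,b(t,x))=x$, yield $\langle B\rangle_t=t$ and a drift equal to $-(b_t+\tfrac12 b_{xx})/b_x=0$. Hence $B$ is an $(\cF_{\delta(t)})$-Brownian motion by L\'evy's characterization, so $Z_t=b(t,B_t)=E[F_\mu^{-1}\circ\Phi(B_1)\,|\,\cF_{\delta(t)}]$ is a true martingale, giving~\eqref{eq:100914a10}, and $Z=b(\cdot,B)\stackrel{d}{=}N=b(\cdot,\tilde W)$ as processes, giving~\eqref{eq:100914a11}.

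It remains to produce $\delta$ and to identify its expectation. For solvability of the ODE on $[0,1)$ I would argue pathwise: for each fixed $t<1$ the right-hand side is a continuous, strictly positive function of $\delta(t)$ as long as $M_{\delta(t)}\in I$, so a maximal increasing solution exists, and the content is that it does not explode before $t=1$. Here I would use the matching $\langle Z\rangle_t=\int_0^t b_x^2(s,B_s)\,ds$ to control $\delta$ and the Engelbert--Schmidt property $1/\eta^2\in L^1_{\loc}(I)$ to absorb the singularities of the denominator, exactly as in Theorem~1 of~\cite{AHS}. Setting $\delta(1)=\lim_{t\uparrow1}\delta(t)$, which is well defined in $[0,\infty]$ by monotonicity, the embedded law is $\Law(M_{\delta(1)}|\cF_0)=\Law(M_0+a(M_0)N_1|\cF_0)=K(M_0,a(M_0),\cdot)$. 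Finally, since $(q(M_0,M_t)-t)$ is a local martingale, time-changing and applying optional sampling gives $E[\delta(1)|\cF_0]=E[q(M_0,M_{\delta(1)})|\cF_0]=\int q(M_0,x)\,K(M_0,a(M_0),dx)=Q(M_0)$, which is~\eqref{eq:100914a5} and in particular shows $\delta(1)<\infty$ a.s.

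I expect the main obstacle to be twofold, both parts stemming from the irregularity of $\eta$. First, making the pathwise ODE construction rigorous on all of $[0,1)$ --- guaranteeing non-explosion of $\delta$ and that $M_{\delta(t)}$ never leaves the interval $I$ on which $\eta$ does not vanish --- requires the time-change analysis of~\cite{AHS} rather than classical Cauchy--Lipschitz theory, precisely because $1/\eta^2$ is only locally integrable. Second, the optional-sampling step needs a genuine, not merely local, martingale argument: the standing hypothesis $Q(M_0)<\infty$ a.s.\ is exactly what supplies the uniform integrability needed to pass from the local martingale $q(M_0,M_{\delta(\cdot)})-\delta(\cdot)$ to the identity $E[\delta(1)|\cF_0]=Q(M_0)$, mirroring Theorem~3 of~\cite{AHS}.
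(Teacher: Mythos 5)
Your proposal is correct and takes essentially the same route as the paper: the paper gives no separate proof of this proposition, stating only that it is a straightforward generalization of Theorems~1 and~3 and Lemma~2 of \cite{AHS}, which is precisely your reduction --- condition on $\cF_0$ (freeze $M_0=y$ and $a(y)$), invoke the fixed-start Bass-type construction of \cite{AHS}, and check measurability in $y$ --- and your sketch of the internal mechanics (backward heat equation for $b$, L\'evy characterization after the time change, optional sampling under $Q(M_0)<\infty$) is consistent with the method summarized in the paper's appendix. The only blemish, your passing claim that the right-hand side of the ODE is continuous in $\delta(t)$ (false for merely Borel $\eta$), is neutralized by your own observation that the rigorous construction must go through the time-change analysis of \cite{AHS} rather than classical Cauchy--Lipschitz theory.
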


\begin{corollary}\label{cor:100914a1}
Assume that \eqref{eq:100914a2} holds true.
Then
\ben\label{eq:100914a12}
\Law(M_{\delta(t)};\,t\in[0,1]\,|\,\cF_0)
=G(M_0,\cdot)
\quad P\text{-a.s.},
\een
where the kernel $G$ is given by~\eqref{eq:ahsc2}
(recall the relation between the processes
$(N_t)$ and $(L_t)$
right after~\eqref{eq:100914a4}).
In particular, $\delta(1)$ is a solution
of the embedding problem~\eqref{eq:eiit7}
satisfying~\eqref{eq:100914a5}
(see~\eqref{eq:ahsc0} and~\eqref{eq:ahsc1}).
\end{corollary}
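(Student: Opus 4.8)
The plan is to read off \eqref{eq:100914a12} from the path-identity \eqref{eq:100914a11} of Proposition~\ref{prop:100914a1}, exploiting that $(M_{\delta(t)})_{t\in[0,1]}$ arises from $(Z_t)_{t\in[0,1]}$ by the same $\cF_0$-measurable affine transformation that produces $(L_t)$ from $(N_t)$. For $y\in I$ let $\psi_y\colon C[0,1]\to C[0,1]$ be the affine map $\psi_y(w)=y+a(y)w$. By \eqref{eq:100914a10} we have $(M_{\delta(t)})_{t\in[0,1]}=\psi_{M_0}\big((Z_t)_{t\in[0,1]}\big)$, and by \eqref{eq:100914a4} together with $N_t=b(t,\tilde W_t)$ we have $g(y,t,\tilde W_t)=\psi_y(N)_t$; hence the kernel in \eqref{eq:ahsc2} is $G(y,\cdot)=\Law\big(\psi_y(N)\big)$, so that \eqref{eq:ahsc1} is exactly the analogue of the desired \eqref{eq:100914a12} on the tilde space.

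First I would note the joint measurability of $(y,w)\mapsto\psi_y(w)$ (immediate, since $a$ is Borel and evaluation at each $t$ gives $(y,w)\mapsto y+a(y)w(t)$, and $\cB(C[0,1])$ is generated by the coordinate maps). Then I would apply the standard freezing lemma for regular conditional distributions: for jointly measurable $h\colon I\times C[0,1]\to[0,\infty)$, the $\cF_0$-measurability of $M_0$ gives $E[h(M_0,Z)\mid\cF_0](\omega)=\int h(M_0(\omega),w)\,\nu(\omega,dw)$ for $P$-a.e.\ $\omega$, where $\nu(\omega,\cdot)$ is a regular conditional distribution of $(Z_t)_{t}$ given $\cF_0$. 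Taking $h(y,w)=1_A(\psi_y(w))$ for a fixed $A\in\cB(C[0,1])$ and substituting, via \eqref{eq:100914a11}, the non-random law $\Law(N)$ for $\nu(\omega,\cdot)$, I obtain $P\big((M_{\delta(t)})_t\in A\mid\cF_0\big)=\Law(N)\big(\psi_{M_0}^{-1}(A)\big)=G(M_0,A)$ a.s. Running $A$ through a countable generating algebra of $\cB(C[0,1])$ upgrades this to the $P$-a.s.\ identity of kernels \eqref{eq:100914a12}.

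For the ``in particular'' statement I would push \eqref{eq:100914a12} forward under the endpoint evaluation $\pi_1\colon w\mapsto w(1)$. This yields $\Law(M_{\delta(1)}\mid\cF_0)=G(M_0,\cdot)\circ\pi_1^{-1}$, and since the $\pi_1$-marginal of $G(y,\cdot)$ is the law of $L_1$, equal to $K(y,a(y),\cdot)$ by \eqref{eq:ahsc0}, I conclude $\Law(M_{\delta(1)}\mid\cF_0)=K(M_0,a(M_0),\cdot)$; thus $\tau=\delta(1)$ solves \eqref{eq:eiit7}. The conditional-expectation identity \eqref{eq:100914a5} is already supplied by Proposition~\ref{prop:100914a1}, so together these show that $\delta(1)$ is an embedding stopping time with $E[\delta(1)\mid\cF_0]=Q(M_0)$.

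The only delicate point is the freezing step: one must keep clear that the parameter $M_0$ feeding into $\psi_{M_0}$ is $\cF_0$-measurable (so it may be frozen under $E[\,\cdot\mid\cF_0]$), whereas by \eqref{eq:100914a11} the conditional law of the driving path $Z$ is the \emph{deterministic} law of $N$. Once the joint measurability of $(y,w)\mapsto\psi_y(w)$ and the regular-conditional-distribution form of the tower property are recorded, everything else is bookkeeping; in particular the endpoint-projection step is routine given \eqref{eq:ahsc0}.
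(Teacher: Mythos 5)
Your proof is correct and takes essentially the same route as the paper, which treats the corollary as an immediate consequence of Proposition~\ref{prop:100914a1}: transport the path identity \eqref{eq:100914a11} through the $\cF_0$-measurable affine map coming from \eqref{eq:100914a10} and \eqref{eq:100914a4}, then read off the embedding property from \eqref{eq:ahsc0}--\eqref{eq:ahsc2} and the endpoint marginal. Your freezing-lemma and countable-generating-class steps merely spell out the measure-theoretic bookkeeping that the paper leaves implicit.
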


The next lemma summarizes
the properties we need in this paper.

\begin{lemma}\label{lemma st properties}
Assume~\eqref{eq:100914a2}.
Then the following holds true.

\smallskip
(i) The process
\be
X_t=\frac1{a(M_0)}(M_{\delta(1)\wedge t}-M_0),
\quad t\geq0,
\ee
is a uniformly integrable
$(\cF_t)$-martingale.

\smallskip
(ii) The $(\cF_t)$-stopping time $\delta(1)$
has the same distribution as the random variable
\ben\label{eq:140914a1}
\xi=
\int_0^1 \frac{a^2(\tilde M_0) b^2_x(s,\tilde W_s)}{\eta^2(\tilde M_0 + a(\tilde M_0) b(s,\tilde W_s))}\,ds.
\een
(Of course one can drop the tildes
in the latter formula.)
\end{lemma}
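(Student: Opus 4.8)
The plan is to prove both statements by exploiting the time change $\delta$ from Proposition~\ref{prop:100914a1}, the representation $M_{\delta(t)}=M_0+a(M_0)Z_t$ coming from~\eqref{eq:100914a10}, and the conditional distributional identity~\eqref{eq:100914a11}.

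For part~(i), I would first note that, since $\delta(1)$ is an $(\cF_t)$-stopping time and $M_t-M_0=\int_0^t\eta(M_s)\,dW_s$, the process $X_t=\frac1{a(M_0)}\int_0^{\delta(1)\wedge t}\eta(M_s)\,dW_s$ is a continuous local $(\cF_t)$-martingale (the $\cF_0$-measurable factor $1/a(M_0)$ does not affect this). The real work is to upgrade ``local'' to ``uniformly integrable''. The continuous nondecreasing map $\delta$ sends $[0,1]$ onto $[0,\delta(1)]$ and is strictly increasing on $[0,1)$ (as $\delta'>0$, because $b_x>0$, $a(M_0)>0$, and $\eta$ is finite and nonzero), so that $X_{\delta(t)}=Z_t$ for $t\in[0,1)$ while $X_u=Z_1$ for $u\geq\delta(1)$. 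Now $Z$ is a martingale on the closed interval $[0,1]$ with $Z_1\stackrel{d}{=}N_1\sim\mu$ conditionally on $\cF_0$; since $\mu$ is integrable, $Z_1\in L^1$, hence $Z$ is closed by $Z_1$ and is a uniformly integrable $(\cF_{\delta(t)})$-martingale. For any finite $(\cF_t)$-stopping time $\tau$ one has $X_\tau=Z_\sigma$ with $\sigma=\delta^{-1}(\tau\wedge\delta(1))$ (setting $\delta^{-1}(\delta(1))=1$), and $\sigma$ is an $(\cF_{\delta(t)})$-stopping time; by optional sampling for the UI martingale $Z$ the family $\{Z_\sigma\}$, and therefore $\{X_\tau\}$, is uniformly integrable. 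Thus $X$ is of class~(D), and a continuous local martingale of class~(D) is a true uniformly integrable martingale.

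For part~(ii), I would integrate the ODE~\eqref{ode} to obtain $\delta(1)=\int_0^1\frac{a^2(M_0)\,b_x^2(t,\beta_t)}{\eta^2(M_{\delta(t)})}\,dt$ with $\beta_t:=g^{-1}(M_0,t,M_{\delta(t)})$. Since $g(y,t,x)=y+a(y)b(t,x)$ by~\eqref{eq:100914a4} and $M_{\delta(t)}=M_0+a(M_0)Z_t$ by~\eqref{eq:100914a10}, the definition of $g^{-1}$ gives $b(t,\beta_t)=Z_t$, i.e.\ $\beta_t=b(t,\cdot)^{-1}(Z_t)$, so that
\[
\delta(1)=\int_0^1\frac{a^2(M_0)\,b_x^2(t,\beta_t)}{\eta^2\big(M_0+a(M_0)b(t,\beta_t)\big)}\,dt=\cJ(M_0,\beta),
\qquad
\cJ(y,w):=\int_0^1\frac{a^2(y)\,b_x^2(t,w_t)}{\eta^2(y+a(y)b(t,w_t))}\,dt.
\]
The same functional evaluated at $(\tilde M_0,\tilde W)$ is exactly $\xi$ from~\eqref{eq:140914a1}, because $N_t=b(t,\tilde W_t)$ gives $\tilde W_t=b(t,\cdot)^{-1}(N_t)$. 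Inverting $b$ pathwise, which is legitimate on $t\in[0,1)$ where $b(t,\cdot)$ is a continuous strictly increasing bijection (the single endpoint $t=1$ being Lebesgue-null), defines a measurable map $\Psi$ on path space with $\beta=\Psi(Z)$ and $\tilde W=\Psi(N)$. Then~\eqref{eq:100914a11} yields $\Law(\beta\mid\cF_0)=\Law(\tilde W)$, i.e.\ Wiener measure. Writing $\nu_y:=\Law(\cJ(y,\tilde W))$, we get $\Law(\delta(1)\mid\cF_0)=\nu_{M_0}$ and, using the independence $\tilde W\perp\tilde M_0$, $\Law(\xi\mid\tilde M_0)=\nu_{\tilde M_0}$; since $M_0\stackrel{d}{=}\tilde M_0\sim\gamma$, integrating out gives $\delta(1)\stackrel{d}{=}\xi$.

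The main obstacle I expect lies in part~(i): passing from the martingale property of $Z$ in the time-changed filtration to a genuine uniformly integrable martingale property of $X$ in the original filtration. The subtlety is that $Z$ being a UI martingale does \emph{not} make $\sup_s|Z_s|$ integrable, so one cannot dominate $X$ pathwise and must instead argue through class~(D) and optional sampling. The remaining points---smoothness and invertibility of $b(t,\cdot)$ near $t=1$, and measurability of the pathwise inversion $\Psi$---are governed by the structural facts about $b$ recalled in the Appendix and are essentially routine.
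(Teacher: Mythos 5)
Your proposal is correct, and for part (ii) it is essentially the paper's own argument: the paper also expresses $\delta(1)$ as a measurable functional of $(M_0,(M_{\delta(t)})_{t\in[0,1]})$ via the ODE~\eqref{ode} and transfers its conditional law through~\eqref{eq:100914a12} (equivalently through~\eqref{eq:100914a11}, which is what you use), then integrates out the common starting law of $M_0$ and $\tilde M_0$; your pathwise inversion $\Psi$ is exactly the relation $g^{-1}(\tilde M_0,t,L_t)=\tilde W_t$ that the paper invokes. Part (i) differs in its final step. The paper applies Doob's optional sampling theorem to the $(\cF_{\delta(t)})$-martingale $Z$ at the \emph{bounded} stopping times $\delta^{-1}(\delta(1)\wedge t)\le 1$ and $1$, which yields directly $E(X_\infty\,|\,\cF_{\delta(1)\wedge t})=X_t$ and then $E(X_\infty\,|\,\cF_t)=X_t$, i.e.\ $X$ is a martingale closed by $X_\infty$; no local-martingale structure of $X$ is needed. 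You instead show that $X$ is a continuous local martingale (stopped stochastic integral, scaled by the $\cF_0$-measurable factor $1/a(M_0)$) of class~(D), the class-(D) property coming from $X_\tau=Z_\sigma$ and optional sampling over \emph{all} stopping times $\sigma\le1$. Both routes rest on the same two ingredients --- strict monotonicity of $\delta$, so that $\delta^{-1}$ exists, and optional sampling for the closed martingale $Z$ --- but yours additionally requires two standard facts the paper's route avoids: that the $\cF_0$-scaled stopped integral is indeed a local martingale (this needs the localization trick on the $\cF_0$-sets $\{a(M_0)\ge 1/n\}$), and that a continuous local martingale of class~(D) is a uniformly integrable martingale. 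One small caveat: you justify strict monotonicity of $\delta$ by asserting $b_x>0$, a pointwise fact that is true (Gaussian smoothing of the nondecreasing, nonconstant function $F_\mu^{-1}\circ\Phi$ has strictly positive derivative) but is not among the properties of $b$ recorded in the appendix, so it would need a short separate argument; the paper sidesteps this by noting that, by \eqref{eq:100914a10}--\eqref{eq:100914a11}, an interval of constancy of $\delta$ would force one for $N_t=b(t,\tilde W_t)$, which is impossible since $b(t,\cdot)$ is strictly increasing.
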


\begin{proof}
(i) First observe that the $(\cF_t)$-time change
$(\delta(t))_{t\in[0,1]}$
is $P$-a.s.\ strictly increasing on $[0,1]$.
Indeed, if it had an interval of constancy,
then, by~\eqref{eq:100914a10}
and~\eqref{eq:100914a11},
the process $(N_t)_{t\in[0,1]}$
would have an interval of constancy,
which is impossible because
$N_t=b(t,\tilde W_t)$ and,
for $t\in[0,1)$,
$b$ is smooth in both arguments
and $b(t,\cdot)$ is strictly increasing.
Thus, the inverse $\delta^{-1}$
is well defined.

Now, for a fixed $t\geq0$,
define $\eta=\delta^{-1}(\delta(1)\wedge t)$.
Since $\delta(1)\wedge t$ is an $(\cF_t)$-stopping time,
$\eta$ is an $(\cF_{\delta(t)})$-stopping time.
Clearly, $\eta\leq1$.
Doob's optional sampling theorem
applied to the $(\cF_{\delta(t)})$-martingale
$(Z_t)_{t\in[0,1]}$ (see~\eqref{eq:100914a10})
and to the bounded $(\cF_{\delta(t)})$-stopping times
$\eta$ and~$1$
yields
$E(Z_1|\cF_{\delta(\eta)})=Z_\eta$, $P$-a.s.,
which is equivalent to
\be
E(X_\infty|\cF_{\delta(1)\wedge t})=X_t
\quad P\text{-a.s.}
\ee
A short calculation reveals that,
since the process $(X_t)_{t\geq0}$
is stopped at $\delta(1)$,
we also have
\be
E(X_\infty|\cF_t)=X_t
\quad P\text{-a.s.}
\ee
This concludes the proof of~(i).

\smallskip
(ii) Formulas \eqref{ode},
\eqref{eq:100914a12},
\eqref{eq:ahsc2},~\eqref{eq:ahsc1}
as well as
\be
g^{-1}(\tilde M_0,t,L_t)=\tilde W_t
\quad\text{and}\quad
L_t=\tilde M_0+a(\tilde M_0)b(t,\tilde W_t)
\ee
immediately imply
\ben\label{eq:140914a2}
\Law(\delta(1)|\cF_0)=H(M_0,\cdot)
\quad P\text{-a.s.},
\een
where the kernel $H$ is given by the formula
\be
H(y,\cdot)=\Law\left(
\int_0^1 \frac{a^2(y)b_x^2(s,\tilde W_s)}{\eta^2(y+a(y)b(s,\tilde W_s))}\,ds\right),
\quad y\in I.
\ee
Since $\tilde M_0$ is $\tilde\cF_0$-measurable
and the process $(\tilde W_s)$
is independent of $\tilde\cF_0$,
then for the random variable
$\xi$ of~\eqref{eq:140914a1} we get
\ben\label{eq:140914a3}
\Law(\xi|\tilde\cF_0)=H(\tilde M_0,\cdot)
\quad\tilde P\text{-a.s.}
\een
The statement now follows from
\eqref{eq:140914a2},~\eqref{eq:140914a3}
and the fact that $M_0$
and $\tilde M_0$
have the same distribution.
\end{proof}

Sometimes we use the notation $\Delta = \delta(1)$
and also write $\Delta(M_0)$ instead of $\Delta$
whenever we want to stress
the dependence on~$M_0$.

\bibliographystyle{plain}
\bibliography{literature}

\end{document}